\numberwithin{equation}{section}
\newtheorem{theorem}{Theorem}[section]
\newtheorem{proposition}[theorem]{Proposition}
\newtheorem{conjecture}[theorem]{Conjecture}
\newtheorem{corollary}[theorem]{Corollary}
\newtheorem{remark}[theorem]{Remark}
\newtheorem{lemma}[theorem]{Lemma}
\newtheorem{definition}[theorem]{Definition}
\newtheorem{problem}[theorem]{Problem}
\newtheorem{maintheorem}[theorem]{Main Theorem}
\newtheorem{claim}[theorem]{Claim}
\newtheorem{acknowledgements}[theorem]{Acknowledgements}
\def\BB{\mathcal{B}}
\def\ZZ{\mathbb{Z}}
\def\CC{\mathbb{C}}
\def\RR{\mathcal{R}}
\def\II{\mathcal{I}}
\def\PP{\mathbb{P}}
\def\1{\mathbb{1}}
\def\gg{\mathfrak{g}}
\def\hh{\mathfrak{h}}
\def\OO{\mathcal{O}}
\def\PP{\mathbb{P}}
\begin{document}

\title{Braided Symmetric Algebras of simple $U_q(sl_2)$-modules  and Their Geometry}
\author{Sebastian Zwicknagl}
\maketitle

\begin{abstract}
In the present paper we prove decomposition formulae for the braided symmetric powers of simple  $U_q(sl_2)$-modules, natural quantum analogues of the classical symmetric powers of a module over a complex semisimple Lie algebra. We show that their point modules form natural non-commutative curves and  surfaces and conjecture that braided symmetric algebras give rise to an interesting non-commutative geometry, which can be viewed as a flat deformation of the geometry associated to their classical limits.   
\end{abstract}

\tableofcontents

 \section{Introduction}

Braided Symmetric and Exterior Algebras were introduced by A.~Berenstein and the author in \cite{BZ} as quantum analogs for the symmetric and exterior algebras of finite-dimensional modules over a quantized enveloping algebra $U_q(\gg)$.  One motivation for this project was to develop a new, quantum, approach to the following classical problem which is still open, and not completely settled even in the case of $\gg=sl_2(\CC)$:

\begin{problem}
\label{pr:decomp}
Let $V$ be a finite-dimensional module over a semisimple  complex Lie algebra $\gg$. Find a decomposition formula for  the symmetric and exterior powers of $V$.
\end{problem}  

The main result of the present  paper is such a decomposition formula for the  simple $U_q(sl_2)$-modules (Theorem \ref{th:main-intro} and Theorem \ref{th:main}). Let us briefly explain the definition of the braided powers and algebras, and outline their relation to the classical counterparts and some other objects in algebra and geometry. Then we will briefly outline the proof and explain some of the consequences in non-commutative geometry.
  
Recall that the category of finite-dimensional   $U_q(\gg)$-modules is braided monoidal, with  braiding $\RR_{U,V}:U\otimes V\to V\otimes U$  given by the permutation of factors composed with the action of the universal $R$-matrix. In \cite{BZ}, we define  for each finite-dimensional $U_q(\gg)$-module V, the braided exterior square $\Lambda^2_q V\subset V\otimes V$ to be the span of all "negative" eigenvectors of $\RR_{V,V}$ (i.e.~of the form $-q^{r}$, $r\in \ZZ$), and the braided symmetric square $S^2_q V\subset V\otimes V$ to be the span of all positive eigenvectors of $\RR_{V,V}$ (i.e.~of the form $q^{r}$, $r\in \ZZ$). Note that  
$$V\otimes V=\Lambda^2_q V\oplus S^2_q V\ ,$$
since $\RR_{V,V}^2$ is a diagonalizable endomorphism whose eigenvalues are powers of $q$. 
We define the braided symmetric algebra $S_q(V)$ to be the quotient of the tensor algebra $T(V)$ by the ideal generated by $\Lambda^2_q V$, and analogously for the braided exterior algebra $\Lambda_q(V)$. Note that these algebras are naturally $\ZZ_{\ge0}$ graded.  Additionally we  constructed  braided symmetric and exterior powers and were able to show that they are isomorphic as $U_q(\gg)$-modules to the respective graded components of the braided symmetric and exterior algebras.  It was shown in \cite{BZ} that the algebras $S_q(V)$ are deformations of the classical symmetric algebras  $S(\overline V)$ of the semiclassical limit  $\overline V$ of $V$ (see Section \ref{se:classical limit}). and a complete classification of the cases in which this deformation is flat was obtained by the author in \cite{ZW}.  
It is interesting to note that our construction generalizes concepts that have been present in the theory of quantum groups since its inception. Briefly after introducing quantum groups in \cite{Jimbo 1}, Jimbo constructed irreducible modules of $U_q(gl(n+1))$, using an analogue of the Schur-Weyl duality which allowed him to decompose the $m$-fold tensor products of the natural representation $\CC^{n+1}$ as modules over  Hecke-algebras $\mathcal{H}_m$ (see \cite{Jimbo 2}). The  $\mathcal{H}_m$-invariant submodule of $(\CC^{n+1})^{\otimes m}$ corresponds to our $m$-th braided symmetric power of the natural representation.

The list of  simple modules with this property  almost coincides with  the list of maximal parabolic subalgebras with Abelian nilradical, and they play an important  role in classical invariant theory (see e.g.~ Howe's article \cite{Ho}), and provide many known examples of quantized coordinate rings  and exterior powers (see e.g.~\cite{M}, \cite{Nou} and \cite{JMO}, as well as \cite{GY}). This explains why braided symmetric algebras have recently  been  used by Lehrer, Zhang and Zhang in \cite{LZZ} to study non-commutative classical invariant theory.

 Howe  studies classical invariant theory from the point of view of multiplicity free-actions, i.e.~ he considers  symmetric algebras of modules, where each graded component splits into a direct sum of pairwise non-isomorphic simple modules. In the present paper, we show that the braided symmetric algebras  of simple $U_q(sl_2)$-modules have this property, and prove an explicit decomposition formula, the following Main Theorem, where $V_k$ denotes a $(k+1)$-dimensional simple $U_q(sl_2)$-module.
 
 \begin{maintheorem}
 \label{th:main-intro}
 \noindent(a) If $\ell$ is odd, and $n\ge 3$, then
 $$ S_q ^nV_\ell\cong    \bigoplus_{i=0}^{\frac{\ell-1}{2}} V_{n\ell-4i}\ ,
 \quad \Lambda_q^n V_\ell=0   \ .$$
  \noindent(a) If $\ell$ is even, and $n\ge 3$, $p\ge4$, then
 $$S_q ^nV_\ell\cong \bigoplus_{i=0}^{\frac{n\ell}{4}} V_{n\ell-4i} \ ,\quad 
 \Lambda_q^3 V_\ell \cong \bigoplus_{i=\frac{\ell}{2}}^{\frac{3\ell-2}{4}} V_{3\ell-4i-2}\ , \quad  \Lambda_q^k V_\ell=0  \ .$$
 \end{maintheorem}
 The results for the exterior powers and the symmetric cubes were already obtained in \cite{BZ}, hence the main accomplishment of the present work is the explicit computation of the higher braided symmetric powers.
  The main idea of the proof is as follows: First, notice that the classical limit of a braided symmetric algebra $\overline{S_q(V)}$ is a Poisson algebra with the Poisson bracket defined by the standard classical $r$-matrix (see also Section \ref{se:classical limit}). It can be described as  a quotient of  $S(\overline V)$ by some ideal. The classical $r$-matrix defines a skew-symmetric quadratic bracket on   $S(\overline V)$ which does not necessarily satisfy the Jacobi identity--hence it need not be Poisson. The obstruction is an ideal generated in degree $3$, and it allows us to construct the minimal quotient of $S(\overline V)$ which is Poisson, the Poisson closure. The generators of the obstruction ideal were computed in \cite{BZ}, and in the present paper we show that $\overline{S_q(V)}$ is isomorphic to the Poisson closure of $S(\overline V)$  if $V$ is an even-dimensional simple  $U_q(sl_2)$-module, by employing results on quantum Howe-duality (see \cite{Zh}) obtained in \cite{BZ}. 

In the case of odd-dimensional simple  $U_q(sl_2)$-modules  $V_{2 k}$ we  consider the $k$-th Veronese algebra $V_q(2,k)$  of  $S_q(V_2)\cong \CC_q[x_0,x_1,x_2]$. There exists a surjective   $U_q(sl_2)$-module algebra map $S_q(V_{2 k})  \to A\subset V_q(2,k)$ where $A$ is a certain subalgebra of $V_q(2,k)$. We then easily conclude the assertion of Theorem \ref{th:main}.

The previous paragraph suggests a connection to non-commutative geometry. Following  Artin, Tate and Van Den Bergh(see e.g.~\cite{ATV}), as well as Polishchuk  \cite{Pol} one can define $Proj$ for non-commutative algebras. This notion was explored in more generality by Artin and Zhang in \cite{AZ 94} as follows. Let $A$ be a non-commutative algebra, $gr(A)$ the category of finitely  graded $A$-modules (here grading means $\ZZ$-grading) and $s$ the auto-equivalence induced by the degree-shift functor. Then, $qgr(A)$ is the quotient of the category of $gr(A)$-modules by the finite-dimensional modules (the torsion modules) and $Proj(A)$ is given by $proj(A)=(qgr(A),_A A,s)$. This allows for the definition of non-commutative point schemes.  In this paper we show that the non-commutative point-schemes attached to the braided symmetric algebras  for simple $U_q(sl_2)$-modules are  obtained from Veronese varieties. Moreover, they are deformations of the point schemes of their respective classical limits (see Section \ref{se:non-comm geom}). In the case $\ell=3$, this was first noticed by Vancliff in \cite{Van}. That the symmetric algebra of the four-dimensional simple $sl_2$-module allows no flat deformations was first published by Rossi-Doria in \cite{R-D}.    We   conjecture that braided symmetric and exterior algebras give rise to a beautiful and natural geometry in the setting of Artin and Zhang, which can be obtained from the geometry of the classical limits. 

Finally, while classical symmetric and exterior powers, in general, are given as intersections of highly non-generic subspaces--if $\Lambda^2 \overline V\subset \overline V\otimes \overline V$ was generic, then $\Lambda^3 \overline V=0$-- it appears from the results of \cite{BZ}, \cite{ZW} and our results that braided symmetric and exterior powers are as generic as the can be while still respecting the weight grading. We therefore believe that these algebras are natural objects which may shed significant insight into Problem \ref{pr:decomp}.  

The paper is organized as follows: First we recall the notions of braided symmetric algebras and their classical limits in Section \ref{se:prlime}. The following  Section \ref{se:the class theorem}  is devoted to Theorem \ref{th:main} and its proof, while Section \ref{se:Applications} considers the applications to the theory of Poisson closures and non-commutative geometry. Finally, we provide some facts about classical and quantum Veronese algebras in Appendix \ref{ap: Veronese}.

\begin{acknowledgements}
The author would like to thank A.~Berenstein for many stimulating discussions about braided symmetric algebras. However, he would have never been able to see the connection to Veronese algebras and surfaces, if M.~Vancliff had not told him about the results of her dissertation \cite{Van}, where she obtained that the point-modules of braided symmetric algebra of the irreducible $4$-dimensional representation form a Veronese curve.  She was therefore also the first person to notice that the $4$-dimensional irreducible $U_q(sl_2)$-module does not permit a symmetric algebra which is a flat deformation of the classical symmetric algebra.
Finally we would like t thank the referee for pointing us to the Artin and Zhang"s paper on non-commutative geometry \cite{AZ 94}.
 \end{acknowledgements}
  
\section{Braided Symmetric Algebras}
\label{se:prlime}
\subsection{The Quantum Group $U_q(\gg)$ and its Modules}
\label{se:q-group}

We start with the definition of the quantized enveloping algebra
associated with a complex reductive Lie algebra $\gg$ (our standard
reference here will be \cite{brown-goodearl}). Let $\hh\subset \gg$
be a Cartan subalgebra,  $P=P(\gg)$ the weight lattice and let $A=(a_{ij})$ be the Cartan matrix
for $\gg$. Additionally, let $(\cdot,\cdot)$ be the standard non-degenerate symmetric bilinear form on $\hh$.

The {\it quantized enveloping algebra} $U$ is the  $\CC(q)$-algebra generated
by the elements $E_i$ and $F_i$ for $i \in [1,r]$, and $K_\lambda $ for $\lambda \in P(\gg)$,
subject to the following relations:
$$K_\lambda  K_\mu = K_{\lambda +\mu}, \,\, K_0 = 1$$
 $$K_\lambda E_i =q^{(\alpha_i\,,\,\lambda)} E_i K_\lambda,
\,\, K_\lambda F_i =q^{-(\alpha_i\,,\,\lambda)} F_i K_\lambda
$$
for  $\lambda , \mu \in P$ and $i \in [1,r]$;  
\begin{equation}
\label{eq:upper lower relations}
E_i,F_j-F_jE_i=\delta_{ij}\frac{K_{\alpha_i}- K_{-\alpha_i}}{q^{d_i}-q^{-d_i}}
\end{equation}
for $i,j \in [1,r]$, where  $d_i=\frac{(\alpha_i\, ,\,\alpha_i)}{2}$;
and the {\it quantum Serre relations}
\begin{equation}
\label{eq:quantum Serre relations}
\sum_{p=0}^{1-a_{ij}} (-1)^p 
E_i^{(1-a_{ij}-p)} E_j E_i^{(p)} = 0,~\sum_{p=0}^{1-a_{ij}} (-1)^p 
F_i^{(1-a_{ij}-p)} F_j F_i^{(p)} = 0
\end{equation}
for $i \neq j$, where
the notation $X_i^{(p)}$ stands for the $p$-th \emph{divided power}
\begin{equation}
\label{eq:divided-power}
X_i^{(p)} = \frac{X^p}{(1)_i \cdots (p)_i}, \quad
(k)_i = \frac{q^{kd_i}-q^{-kd_i}}{q^{d_i}-q^{-d_i}} \ .
\end{equation}

%

The algebra $U$ is a $q$-deformation of the universal enveloping algebra of
the reductive Lie algebra~$\gg$, so it is commonly
denoted by $U = U_q(\gg)$.
It has a natural structure of a bialgebra with the co-multiplication $\Delta:U\to U\otimes U$
and the co-unit homomorphism  $\varepsilon:U\to \CC(q)$
given by
\begin{equation}
\label{eq:coproduct}
\Delta(E_i)=E_i\otimes 1+K_{\alpha_i}\otimes E_i, \,
\Delta(F_i)=F_i\otimes K_{-\alpha_i}+ 1\otimes F_i, \, \Delta(K_\lambda)=
K_\lambda\otimes K_\lambda \ ,
\end{equation}
\begin{equation}
\label{eq:counit}
\varepsilon(E_i)=\varepsilon(F_i)=0, \quad \varepsilon(K_\lambda)=1\ .
\end{equation}
In fact, $U$ is a Hopf algebra with the antipode anti-homomorphism $S: U \to U$ given by
\begin{equation}
\label{eq:antipode}
 S(E_i) = -K_{-\alpha_i} E_i, \,\, S(F_i) = -F_i K_{\alpha_i}, \,\,
S(K_\lambda) = K_{-\lambda}\ .
\end{equation}

Let $U^-$ (resp.~$U^0$; $U^+$) be the $\CC(q)$-subalgebra of~$U$ generated by
$F_1, \dots, F_r$ (resp. by~$K_\lambda \, (\lambda\in P)$; by $E_1, \dots, E_r$).
It is well-known that, as vectorspaces, $U=U^-\cdot U^0\cdot U^+$ (more precisely,
the multiplication map induces an isomorphism  of vectorspaces $U^-\otimes U^0\otimes U^+ \to U$).



We will consider the full sub-category  $\OO_{f}$ of the category
$U_q(\gg)-Mod$. The objects of $\OO_{f}$ are the finite-dimensional
$U_{q}(\gg)$-modules $V$ that have a weight decomposition
$$V =\oplus_{\mu\in P} V (\mu)\ ,$$
such that each $K_\lambda$ acts on each {\it weight space} $V(\mu)$ by
the multiplication with $q^{(\lambda\,|\,\mu)}$  (see e.g.,
\cite[I.6.12]{brown-goodearl}). Note that for convenience we only consider the modules of {type I} in the terminology of Jantzen \cite{Jan}, but that the results hold of course for any finite-dimensional module over $U_q(\gg)$. The category $\OO_{f}$ is semisimple
and the irreducible objects $V_\lambda$ are  generated by highest
weight spaces $V_\lambda(\lambda)=\CC(q)\cdot v_\lambda$, where
$\lambda$ is a {\it dominant weight}, i.e, $\lambda$ belongs to
$P^+=\{\lambda\in P:(\lambda\,|\,\alpha_i)\ge 0,~ \forall ~i\in
[1,r]\}$, the monoid of dominant weights.

 By definition,  the universal $R$-matrix $R \in U_{q}(\gg)\widehat \otimes U_{q}(\gg)$ (we use $\widehat \otimes$ to denote that the universal $R$-matrix lives in a certain completion of $ U_{q}(\gg) \otimes U_{q}(\gg)$(see e.g. \cite{Jan})) can be  factored as
\begin{equation}
\label{eq:Jordan}
R=R_0R_1=R_1R_0
\end{equation}
 where $R_0$ is ''the diagonal part'' of $R$, and  $R_1$ is unipotent, i.e., $R_1$ is a formal power series
\begin{equation}
\label{eq:R1}
R_1=1\otimes 1+(q-1)x_1+ (q-1)^2x_2+\cdots \ ,
\end{equation}
where all $x_k\in {U'}^-\otimes_{\CC[q,q^{-1}]} {U'}^+$, where
${U'}^-$ (resp.~${U'}^+$) is the integral form of $U^-$ (resp.~$U^+$); i.e.,
${U'}^-$ is the $\CC[q,q^{-1}]$-subalgebra of $U_q(\gg)$ generated by
all $F_i$ (resp. by all $E_i$). 
It is also well known that if $U,V$ are objects of  $\OO_f$, and  $u_\lambda\in U(\lambda)$ and  $v_\mu\in V(\mu)$ are highest weight elements, then 
$R_0(u_\lambda\otimes v_\mu)=q^{(\lambda\,|\,\mu)}u_\lambda\otimes v_\mu$.

Let  $R^{op}=\tau (R)$, where  $\tau:U_{q}(\gg)\widehat \otimes U_{q}(\gg)\to U_{q}(\gg)\widehat \otimes U_{q}(\gg)$
is the permutation of factors. Clearly, $R^{op}=R_0R_1^{op}=R_1^{op}R_0$.

Following \cite[Section 3]{DR1},  define $D\in  U_{q}(\gg)\widehat \otimes U_{q}(\gg)$ by
\begin{equation}
\label{eq:D}
D:=R_0\sqrt{R_1^{op}R_1}=\sqrt{R_1^{op}R_1}R_0 \ .
\end{equation}

Clearly, $D$ is well-defined because $R_1^{op}R_1$ is also unipotent as well as its square root.  By definition,
$D^{2}=R^{op}R$, $D^{op}R=RD$.

Furthermore, define
\begin{equation}
\label{eq:hat R}
\widehat R:=RD^{-1}=(D^{op})^{-1}R=R_1\left(\sqrt{R_1^{op}R_1}\right)^{-1}
\end{equation}
It is easy to see that
\begin{equation}
\label{eq:unitary}
 \widehat R^{op}=\widehat R^{\,\,-1}
\end{equation}
According to \cite[Proposition 3.3]{DR1}, the pair $(U_{q}(\gg), \widehat R)$ is a {\it coboundary} Hopf algebra. For definition and properties of coboundary Hopf algebras see e.g.~\cite{ZW1}.

The category $\OO_{f}$ is naturally braided, with braiding  defined by $\RR_{U,V}:U\otimes V\to V\otimes U$, where
$$\RR_{U,V}(u\otimes v)=\tau R(u\otimes v)$$
for any $u\in U$, $v\in V$, with  $\tau:U\otimes V\to V\otimes U$, as above, the ordinary permutation of factors.

Denote by  $C\in Z(\widehat{U_{q}(\gg)})$ (once again, we have to work in a completion of  $U_{q}(\gg)$) the {\it quantum
Casimir} element which acts on any irreducible $U_q(\gg)$-module
$V_\lambda$ in $\OO_f$ by the scalar multiple
$q^{(\lambda\,|\,\lambda+2\rho)}$, where $2\rho$ is the sum of
positive roots.

The following fact is  well-known.
\begin{lemma}
\label{le:braiding casimir} We have $\RR^2=\Delta(C^{-1})\circ
(C\otimes C)$. In particular, for each $\lambda,\mu,\nu\in  P_+$ the
restriction of $\RR^2$ to the $\nu$-th isotypic component
$I^\nu_{\lambda,\mu}$ of the tensor product $V_\lambda\otimes V_\mu$
is scalar multiplication by
$q^r$ where   $r=(\lambda\,|\,\lambda)+(\mu\,|\,\mu)-(\nu\,|\,\nu)+(2\rho\,|\,\lambda+\mu-\nu)$.

\end{lemma}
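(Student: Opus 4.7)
The plan is to unwind the definition of the braiding, reduce the first identity to the standard Drinfeld formula $R^{op}R=\Delta(C^{-1})(C\otimes C)$, and then deduce the eigenvalue on isotypic components from the Casimir action via Schur's lemma.

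By definition $\RR_{U,V}(u\otimes v) = \tau R(u\otimes v)$, so a direct computation gives $\RR_{V,U}\circ\RR_{U,V} = \tau R\tau R = R^{op}R$ as an operator on $U\otimes V$. Hence the first assertion is equivalent to the identity $R^{op}R = \Delta(C^{-1})(C\otimes C)$ in a suitable completion of $U_q(\gg)\otimes U_q(\gg)$. This identity expresses the fact that $U_q(\gg)$ is a ribbon Hopf algebra: up to the grouplike twist by $K_{2\rho}$ the element $C$ is the ribbon element, and the displayed relation is precisely the defining relation of the ribbon element. I would appeal to this classical structural fact (see e.g.\ \cite{Jan}); alternatively one may verify it directly by evaluating both sides on a highest weight vector $w_\nu$ of a $V_\nu$-summand inside $V_\lambda\otimes V_\mu$, using the factorisation $R=R_0R_1$ of \eqref{eq:Jordan}--\eqref{eq:R1} and tracking how the unipotent factor $R_1$ mixes the weight components of $w_\nu$ while $R_0$ acts diagonally.

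With the first identity in hand, the second assertion follows by a Schur-type argument. Since $C$ is central and the braiding is natural, both $\RR^2$ and $\Delta(C^{-1})(C\otimes C)$ are $U_q(\gg)$-linear endomorphisms of $V_\lambda\otimes V_\mu$; by Schur's lemma applied in the semisimple category $\OO_f$, each acts by a scalar on every isotypic component $I^\nu_{\lambda,\mu}$. On $I^\nu_{\lambda,\mu}$ the element $\Delta(C^{-1})$ acts as $q^{-(\nu\,|\,\nu+2\rho)}$ (because $C$ acts on every copy of $V_\nu$ as $q^{(\nu\,|\,\nu+2\rho)}$), while $C\otimes C$ acts on all of $V_\lambda\otimes V_\mu$ as $q^{(\lambda\,|\,\lambda+2\rho)+(\mu\,|\,\mu+2\rho)}$. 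Multiplying the two contributions yields the eigenvalue $q^r$ with
\[ r = (\lambda\,|\,\lambda)+(\mu\,|\,\mu)-(\nu\,|\,\nu) + (2\rho\,|\,\lambda+\mu-\nu), \]
as claimed. The main obstacle in this plan is the Drinfeld/ribbon identity itself, which I expect to import from the literature; once granted, the rest is straightforward bookkeeping with Casimir eigenvalues.
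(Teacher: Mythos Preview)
Your argument is correct and is exactly the standard one; note, however, that the paper gives no proof of this lemma at all --- it simply introduces it with ``The following fact is well-known'' and moves on. Your reduction to the ribbon identity $R^{op}R=\Delta(C^{-1})(C\otimes C)$ followed by the Casimir-eigenvalue computation is precisely the argument one would find in the references the paper implicitly invokes, so there is nothing to compare.
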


We now define the diagonalizable $\CC(q)$-linear  map
$D_{U,V}:U\otimes V\to U\otimes V$ by $D_{U,V}(u\otimes v)=
D(u\otimes v)$ for any objects $U$ and $V$ of $\OO_f$. It is easy to
see that the operator $D_{V_\lambda,V_\mu}:V_\lambda\otimes V_\mu\to
V_\lambda\otimes V_\mu$ acts on the $\nu$-th isotypic component
$I^\nu_{\lambda,\mu}$ in $V_\lambda\otimes V_\mu$ by the scalar
multiplication with
$q^{\frac{r}{2}}$ with $r=(\lambda\,|\,\lambda)+(\mu\,|\,\mu)-(\nu\,|\,\nu)+(2\rho\,|\,\lambda+\mu-\nu)$ as in Lemma \ref{le:braiding casimir}.

The coboundary Hopf algebra  $(U_{q}(\gg), \widehat R)$ provides $\OO_f$ with the structure of a coboundary or cactus category in the following way.
For any  $U$ and $V$ in $\OO_f$ define the {\it normalized braiding} $\sigma_{U,V}$ by
\begin{equation}
\label{eq:sigma}
\sigma_{U,V}(u\otimes v)=\tau \widehat R (u\otimes v) \ ,
\end{equation}

Therefore, we have by (\ref{eq:hat R}):
\begin{equation}
\label{eq:formula for sigma}
\sigma_{U,V}=D_{V,U}^{-1} \RR_{U,V}=\RR_{U,V}D_{U,V}^{-1} \ .
\end{equation}

We will sometimes write $\sigma_{U,V}$ in a more explicit way:
\begin{equation}
\label{eq:sigma root}
\sigma_{U,V}=\sqrt{\RR_{V,U}^{-1}\RR_{U,V}^{-1}} \RR_{U,V}=\RR_{U,V}\sqrt{\RR_{U,V}^{-1}\RR_{V,U}^{-1}}
\end{equation}


The following fact is an obvious corollary of (\ref{eq:unitary}).
\begin{lemma}
\label{le:symm comm constraint} $\sigma_{V,U}\circ
\sigma_{U,V}=id_{U\otimes V}$ for any  $U,V$ in $\OO_f$. That is,
$\sigma$ is a symmetric commutativity constraint.
\end{lemma}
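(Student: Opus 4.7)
The plan is to unwind the definition of $\sigma$ in terms of $\widehat R$ and reduce the claim directly to equation (\ref{eq:unitary}). By (\ref{eq:sigma}), for $u\in U$ and $v\in V$ we have $\sigma_{U,V}(u\otimes v)=\tau\widehat R(u\otimes v)$, where $\tau$ is the flip. Writing $\widehat R=\sum_i a_i\otimes b_i$ (in the appropriate completion), this gives $\sigma_{U,V}(u\otimes v)=\sum_i b_iv\otimes a_iu$. Applying $\sigma_{V,U}$ to this and using the same expansion of $\widehat R$ (now acting on $V\otimes U$) yields
\[
\sigma_{V,U}\sigma_{U,V}(u\otimes v)=\sum_{i,j}(b_ja_i)u\otimes(a_jb_i)v,
\]
which is precisely the action of the element $\widehat R^{op}\cdot\widehat R\in U_q(\gg)\widehat\otimes U_q(\gg)$ on $u\otimes v$.

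The second step is then to invoke (\ref{eq:unitary}), which asserts $\widehat R^{op}=\widehat R^{-1}$, and therefore $\widehat R^{op}\cdot\widehat R=1\otimes 1$. Substituting gives $\sigma_{V,U}\sigma_{U,V}(u\otimes v)=u\otimes v$ for all $u\otimes v$, which by $\CC(q)$-linearity extends to the whole of $U\otimes V$. There is no real obstacle here; the only subtlety is being careful about the order in which $\widehat R$ and $\tau$ compose when one swaps the roles of $U$ and $V$, and about summation indices in the two copies of $\widehat R$. Once this bookkeeping is done, the identity is an immediate consequence of the coboundary (unitarity) property already recorded in (\ref{eq:unitary}), and the lemma follows, showing that $\sigma$ is a symmetric commutativity constraint on $\OO_f$.
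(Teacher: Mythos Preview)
Your argument is correct and is precisely the unwinding the paper has in mind: the paper simply states that the lemma is an obvious corollary of (\ref{eq:unitary}), and your computation makes this explicit by showing $\sigma_{V,U}\sigma_{U,V}$ acts as $\widehat R^{op}\widehat R=1\otimes 1$. There is no meaningful difference in approach.
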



%
%


\begin{remark} If one replaces the braiding  $\RR$ of $\OO_f$  by its inverse $\RR^{-1}$,
the symmetric commutativity constraint $\sigma$ will not change.
\end{remark}

\subsection{Braided Symmetric and Exterior Powers}
\label{se:BESP}
 In this section we will recall the definitions and some basic properties of the braided symmetric and exterior powers and algebras introduced in \cite{BZ}. 

For any morphism $f:V\otimes V\to V\otimes V$ in $\OO_f$ and $n>1$
we denote by $f^{i,i+1}$, $i=1,2,\ldots,n-1$ the morphism
$V^{\otimes n}\to V^{\otimes n}$ which acts as $f$ on the $i$-th and
the $(i+1)$st factors. Note that $\sigma_{V,V}^{i,i+1}$ is always an
involution on $V^{\otimes n}$.

\begin{definition}
\label{def:symmetric power} For an object $V$ in $\OO_f$ and $n\ge
0$ define the {\it braided symmetric power} $S_\sigma^nV \subset
V^{\otimes n}$ and the {\it braided exterior power}
$\Lambda_\sigma^nV\subset V^{\otimes n}$  by:
$$S_\sigma^nV=\bigcap_{1\le i\le n-1}  Ker~( \sigma_{i,i+1}-id)=\bigcap_{1\le i\le n-1} Im~(\sigma_{i,i+1}+id)\ ,$$
$$\Lambda_\sigma^nV=\bigcap_{1\le i\le n-1} Ker~ (\sigma_{i,i+1}+id)=\bigcap_{1\le i\le n-1} Im~(\sigma_{i,i+1}-id) ,$$
where we abbreviate $\sigma_{i,i+1}=\sigma_{V,V}^{i,i+1}$.
\end{definition}

\begin{remark} Clearly, $-\RR$ is also a braiding on $\OO_f$ and $-\sigma$ is the corresponding normalized braiding.
Therefore, $\Lambda_\sigma^nV=S_{-\sigma}^nV$ and
$S_\sigma^nV=\Lambda_{-\sigma}^nV$. That is, informally speaking,
the symmetric and exterior powers are mutually ''interchangeable''.

\end{remark}

\begin{remark}
\label{re: BSA via braiding}
 Another way to introduce the symmetric and exterior squares involves the well-known fact that the braiding $\RR_{V,V}$
is a semisimple operator $V\otimes V\to V\otimes V$, and all the
eigenvalues of $\RR_{V,V}$ are of the form $\pm q^r$, where $r\in
\ZZ$. Assume that $q\in \mathbb{R}_{> 0}$. Then the  positive eigenvectors of $\RR_{V,V}$ span
$S_\sigma^2V$ and the negative eigenvectors  of $\RR_{V,V}$ span
$\Lambda_\sigma^2V$.
\end{remark}

Clearly, $S_{\sigma}^{0}V=\CC(q)\ ,\ S_{\sigma}^{1}V=V\ , \Lambda_{\sigma}^{0}V=\CC(q)\ ,\ \Lambda_{\sigma}^{1}V=V$, and
$$S_\sigma^2V=\{v\in V\otimes V\,|\, \sigma_{V,V}(v)=v\},~\Lambda_\sigma^2V=\{v\in V\otimes V\,|\, \sigma_{V,V}(v)=-v\} \ .$$





The following fact is obvious.

\begin{proposition}
\label{pr:injections}  The association $V\mapsto
S^n_\sigma V$ is a functor from $\OO_f$ to $\OO_f$, for each $n\ge 0$,  and, similarly,  the
association $V\mapsto \Lambda_\sigma^n V$ is a functor from $\OO_f$
to $\OO_f$. In particular, an embedding $U\hookrightarrow V$ in the
category $\OO_f$ induces injective morphisms
$$S_{\sigma}^n U \hookrightarrow S_{\sigma}^n V, ~ \Lambda_{\sigma}^n U\hookrightarrow \Lambda_{\sigma}^n V\ .$$

\end{proposition}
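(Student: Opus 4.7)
The plan is to verify functoriality by restricting tensor powers of morphisms and then deduce injectivity from exactness of the tensor product in the semisimple category $\OO_f$. The key ingredient is the naturality of the normalized braiding $\sigma$.

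First, I would establish naturality of $\sigma$: for every morphism $f:V\to W$ in $\OO_f$, the square
\begin{equation*}
\sigma_{W,W}\circ(f\otimes f)=(f\otimes f)\circ \sigma_{V,V}
\end{equation*}
commutes. Since $f$ is $U_q(\gg)$-linear, $f\otimes f$ commutes with the action of any element of (a completion of) $U_q(\gg)\otimes U_q(\gg)$, in particular with $\widehat R$; composing with the permutation of factors $\tau$ yields naturality of $\sigma_{U,V}$ in both slots via the formula $\sigma_{U,V}=\tau\widehat R$ from (\ref{eq:sigma}). This is really the only nontrivial input, and it is immediate from the definition.

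Next I would define, for a morphism $f:V\to W$, the induced map $S^n_\sigma f$ as the restriction of $f^{\otimes n}:V^{\otimes n}\to W^{\otimes n}$ to $S^n_\sigma V$. Naturality gives $\sigma_{W,W}^{i,i+1}\circ f^{\otimes n}=f^{\otimes n}\circ \sigma_{V,V}^{i,i+1}$ for each $i$, so if $v$ lies in $\bigcap_i \mathrm{Ker}(\sigma_{V,V}^{i,i+1}-id)$, then $f^{\otimes n}(v)$ lies in $\bigcap_i \mathrm{Ker}(\sigma_{W,W}^{i,i+1}-id)=S^n_\sigma W$. Hence the restriction lands in $S^n_\sigma W$, and functoriality (compatibility with composition and identities) is inherited from that of $V\mapsto V^{\otimes n}$. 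The identical argument, with $-\sigma$ in place of $\sigma$, gives the functoriality of $\Lambda^n_\sigma$.

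Finally, for the injectivity statement, if $\iota:U\hookrightarrow V$ is an embedding in $\OO_f$, then because $\OO_f$ is semisimple (hence tensoring is exact) the induced map $\iota^{\otimes n}:U^{\otimes n}\to V^{\otimes n}$ is injective. The morphism $S^n_\sigma \iota$ constructed above is the restriction of $\iota^{\otimes n}$ to the subspace $S^n_\sigma U\subset U^{\otimes n}$, so it is injective, and likewise for $\Lambda^n_\sigma \iota$. The main (and only real) obstacle is the naturality of $\sigma$, which is mild since $\widehat R$ acts by a central object in the completed tensor square and thus commutes with every intertwiner.
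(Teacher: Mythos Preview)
Your argument is correct and is exactly the standard verification the paper has in mind; the paper itself simply labels the proposition ``obvious'' and gives no proof. One minor remark: you do not need semisimplicity to conclude that $\iota^{\otimes n}$ is injective, since tensor products over the field $\CC(q)$ are always exact.
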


\begin{definition}
\label{def:symmetric algebra}
For any  $V\in Ob(\OO)$   define the {\it braided symmetric algebra} $S_\sigma(V)$ and the {\it braided  exterior algebra} $\Lambda_\sigma(V)$ by:
\begin{equation}
\label{eq:injections}
S_\sigma(V)=T(V)/\left< \Lambda_\sigma^{2}V\right>,~\Lambda_\sigma(V)=T(V)/ \left<S_\sigma^{2}V\right> \ ,
\end{equation}
where $T(V)$ is the tensor algebra of $V$ and $\left<I\right>$ stands for the two-sided  ideal in $T(V)$ generated by a subset $I\subset T(V)$.
\end{definition}

Note that the algebras $S_\sigma(V)$ and $\Lambda_\sigma(V)$ carry a natural $\ZZ_{\ge 0}$-grading:
$$S_\sigma(V)=\bigoplus_{n\ge 0} S_\sigma(V)_n, ~~~\Lambda_\sigma(V)=\bigoplus_{n\ge 0} \Lambda_\sigma(V)_n\ ,$$
since the respective ideals in $T(V)$ are homogeneous.

\smallskip

Denote by  $\OO_{gr,f}$ the sub-category of $U_q(\gg)-Mod$ whose objects are $\ZZ_{\ge 0}$-graded:
$$V=\bigoplus_{n\in \ZZ_{\ge 0}}  V_n\ ,$$

where each $V_n$ is an object of $\OO_f$; and morphisms are those homomorphisms of  $U_q(\gg)$-modules which preserve the $\ZZ_{\ge 0}$-grading.

Clearly,  $\OO_{gr,f}$ is a tensor category under the natural
extension of the tensor structure of $\OO_f$. Therefore, we can
speak of algebras and co-algebras in $\OO_{gr,f}$.

%
%
%

By the very definition,  $S_{\sigma}(V)$ and $\Lambda_{\sigma}(V)$ are algebras in $\OO_{gr,f}$.

\begin{proposition} The assignments $V\mapsto S_\sigma(V)$ and $V\mapsto \Lambda_\sigma(V)$ define functors from $\OO_f$
to the category of algebras in $\OO_{gr,f}$.
\end{proposition}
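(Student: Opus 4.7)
The plan is to exhibit the functor on arrows. Given $f\colon V\to W$ in $\OO_f$, let $T(f)\colon T(V)\to T(W)$ be the induced morphism of graded $U_q(\gg)$-module algebras on tensor algebras, whose restriction to $V^{\otimes n}$ is $f^{\otimes n}$. I will set $S_\sigma(f)$ to be the map induced by $T(f)$ on the quotient $S_\sigma(V)=T(V)/\langle \Lambda^2_\sigma V\rangle$, and define $\Lambda_\sigma(f)$ analogously; the exterior case is in fact subsumed by the symmetric one via the remark $\Lambda^n_\sigma V = S^n_{-\sigma}V$.

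The key step is to show that $T(f)$ sends the ideal $\langle \Lambda^2_\sigma V\rangle$ into $\langle \Lambda^2_\sigma W\rangle$. Because $T(f)$ is an algebra map this reduces to the inclusion $(f\otimes f)(\Lambda^2_\sigma V)\subset \Lambda^2_\sigma W$, which is precisely the functoriality assertion of Proposition \ref{pr:injections}. This in turn follows from naturality of the normalized braiding: since the universal $R$-matrix (and consequently the operator $D$ of (\ref{eq:D})) lies in a completion of $U_q(\gg)^{\otimes 2}$, the tensor product $f\otimes f$ is a morphism of $U_q(\gg)\otimes U_q(\gg)$-modules and therefore intertwines the actions of $R$ and of $D$. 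Combined with (\ref{eq:formula for sigma}), this yields
$$\sigma_{W,W}\circ(f\otimes f)=(f\otimes f)\circ\sigma_{V,V}\ .$$
For $v\in \Lambda^2_\sigma V$, i.e.\ $\sigma_{V,V}(v)=-v$, we immediately deduce $\sigma_{W,W}((f\otimes f)(v))=-(f\otimes f)(v)$, so $(f\otimes f)(v)\in \Lambda^2_\sigma W$ as required.

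With this inclusion of ideals in hand, $S_\sigma(f)\colon S_\sigma(V)\to S_\sigma(W)$ is well defined, is a $U_q(\gg)$-module algebra homomorphism (inherited from $T(f)$), and preserves the $\ZZ_{\ge 0}$-grading (because the defining ideals are homogeneous). Functoriality, meaning $S_\sigma(\mathrm{id}_V)=\mathrm{id}_{S_\sigma(V)}$ and $S_\sigma(g\circ f)=S_\sigma(g)\circ S_\sigma(f)$ for composable morphisms $f,g$, then follows at once from the corresponding identities for the tensor-algebra assignment $V\mapsto T(V)$.

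The only non-routine ingredient is the naturality of $\sigma$, and within that the mildly delicate point is the naturality of the ``square-root'' factor $D$. One can argue for it either abstractly (as above, via $D\in U_q(\gg)\widehat\otimes U_q(\gg)$), or equivalently by invoking Lemma \ref{le:braiding casimir} and the subsequent isotypic-component description of $D_{V,V}$: since $f\otimes f$ sends the isotypic component $I^\nu_{V,V}$ into $I^\nu_{W,W}$ and $D$ acts by the same scalar $q^{r/2}$ on both, commutation of $f\otimes f$ with $D$ is automatic. I anticipate no obstacle beyond recording this observation cleanly.
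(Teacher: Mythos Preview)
Your argument is correct and is precisely the expected one. Note, however, that the paper states this proposition without proof: it is treated as an immediate consequence of the preceding Proposition~\ref{pr:injections} (itself labeled ``obvious''), together with the standard functoriality of $V\mapsto T(V)$. Your write-up simply supplies the routine details the paper omits, and there is nothing to compare beyond that.
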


We conclude the section with two important features of braided symmetric exterior powers and algebras.

\begin{proposition}\cite[Prop.2.13]{BZ}
\label{pr:n-th symm-power} For any  $V$ in $\OO_{f}$ each embedding
$V_{\lambda}\hookrightarrow V$ defines embeddings
$V_{n\lambda}\hookrightarrow S_{\sigma}^n V $ for all  $n\ge 2$. In
particular, the  algebra $S_{\sigma}(V)$ is infinite-dimensional.
\end{proposition}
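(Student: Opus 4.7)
The plan is to reduce to the case $V = V_\lambda$ irreducible, and then exhibit a copy of $V_{n\lambda}$ inside $S_\sigma^n V_\lambda$ by locating a highest weight vector of weight $n\lambda$ fixed by every $\sigma_{i,i+1}$. First, by Proposition \ref{pr:injections}, any embedding $V_\lambda \hookrightarrow V$ induces an embedding $S_\sigma^n V_\lambda \hookrightarrow S_\sigma^n V$, so it suffices to produce an embedding $V_{n\lambda} \hookrightarrow S_\sigma^n V_\lambda$.

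Let $v_\lambda$ be a generator of the highest weight space of $V_\lambda$. The tensor $v_\lambda^{\otimes n} \in V_\lambda^{\otimes n}$ has weight $n\lambda$, and iterated use of the coproduct formula (\ref{eq:coproduct}) together with $E_i v_\lambda = 0$ shows that it is annihilated by every $E_i$; hence it is a highest weight vector. Since $\OO_f$ is semisimple, the $U_q(\gg)$-submodule it generates is isomorphic to $V_{n\lambda}$. Once I show that $v_\lambda^{\otimes n}$ lies in $S_\sigma^n V_\lambda$, the $U_q(\gg)$-stability of the latter forces this entire copy of $V_{n\lambda}$ to lie inside $S_\sigma^n V_\lambda$, proving the first assertion. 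The infinite-dimensionality of $S_\sigma(V)$ then follows, since each graded component of $S_\sigma(V)$ is isomorphic as a $U_q(\gg)$-module to $S_\sigma^n V$, and each of the latter is nonzero.

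To show $v_\lambda^{\otimes n}$ is fixed by every $\sigma_{i,i+1}$, it suffices to verify $\sigma_{V_\lambda,V_\lambda}(v_\lambda \otimes v_\lambda) = v_\lambda \otimes v_\lambda$. By (\ref{eq:formula for sigma}), this reduces to showing that $\RR_{V_\lambda,V_\lambda}$ and $D_{V_\lambda,V_\lambda}$ act by the same scalar on $v_\lambda \otimes v_\lambda$. Using the factorization (\ref{eq:Jordan}) and the unipotent expansion (\ref{eq:R1}), every term of $R_1 - 1\otimes 1$ has its right tensor factor in the positive-degree part of ${U'}^+$, which annihilates $v_\lambda$; the same reasoning applied to $R_1^{op}$ gives $R_1^{op}(v_\lambda \otimes v_\lambda) = v_\lambda \otimes v_\lambda$. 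Combined with $R_0(v_\lambda \otimes v_\lambda) = q^{(\lambda|\lambda)} v_\lambda \otimes v_\lambda$ and the definition (\ref{eq:D}) — noting that the positive square root of $1$ is $1$ — this yields $\RR_{V_\lambda,V_\lambda}(v_\lambda \otimes v_\lambda) = q^{(\lambda|\lambda)} v_\lambda \otimes v_\lambda = D_{V_\lambda,V_\lambda}(v_\lambda \otimes v_\lambda)$, so $\sigma_{V_\lambda,V_\lambda}$ fixes $v_\lambda \otimes v_\lambda$.

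The main subtlety is this last computation: verifying that the eigenvalues of $\RR$ and $D$ on $v_\lambda \otimes v_\lambda$ agree, so that $\sigma$ acts by $+1$ rather than $-1$ on the highest weight vector — equivalently, that the $V_{2\lambda}$ isotypic component of $V_\lambda \otimes V_\lambda$ sits in $S_\sigma^2 V_\lambda$ and not in $\Lambda_\sigma^2 V_\lambda$. Once this normalization is in place, the conclusion follows by routine functoriality and the semisimplicity of $\OO_f$.
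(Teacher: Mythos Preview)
The paper does not supply its own proof of this proposition; it is quoted from \cite[Prop.~2.13]{BZ} and stated without argument. Your proof is correct and is precisely the natural one: reduce to $V=V_\lambda$ via Proposition~\ref{pr:injections}, observe that $v_\lambda^{\otimes n}$ is a highest weight vector of weight $n\lambda$, and verify that $\sigma_{V_\lambda,V_\lambda}$ fixes $v_\lambda\otimes v_\lambda$ because the higher-order terms of both $R_1$ and $R_1^{op}$ annihilate $v_\lambda\otimes v_\lambda$ (one tensor factor always lands in the augmentation ideal of $U'^+$), so that $\RR$ and $D$ act by the same scalar $q^{(\lambda\,|\,\lambda)}$ there. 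The infinite-dimensionality then follows from the identification of $S_\sigma(V)_n$ with $S_\sigma^n V$ (or its dual version \eqref{eq:double duals}), exactly as you indicate.
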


\begin{proposition}\cite[Prop.2.11 and Eq. 2.3]{BZ}
Let $V$ be an object of $\OO_f$ and $V^*$ its dual in $\OO_f$. We have the following $U_q(\gg)$-module isomorphisms.
\begin{equation}
\label{eq:double duals}
(S_\sigma^n V^{*})^*\cong S_\sigma(V)_n, ~(\Lambda_\sigma^n V^{*})^*\cong \Lambda_\sigma(V)_n  \ .
\end{equation}

\end{proposition}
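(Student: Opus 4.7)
My plan is to deduce both isomorphisms in (\ref{eq:double duals}) from the general principle that the normalized braiding $\sigma$ of the coboundary category $\OO_f$ respects dualization, together with the standard linear-algebra duality between quotients and annihilators. The starting observation is that, since $\sigma_{V,V}^2=\mathrm{id}$ by Lemma \ref{le:symm comm constraint}, we have $\Lambda_\sigma^2V=\mathrm{Im}(\sigma_{V,V}-\mathrm{id})$; hence the degree-$n$ component of the ideal $\langle\Lambda_\sigma^2V\rangle$ is
$$I_n=\sum_{i=1}^{n-1}\mathrm{Im}\bigl(\sigma_{i,i+1}-\mathrm{id}\bigr)\subset V^{\otimes n},$$
so that $S_\sigma(V)_n=V^{\otimes n}/I_n$, whereas by Definition \ref{def:symmetric power} one has $S_\sigma^nV^*=\bigcap_{i=1}^{n-1}\mathrm{Ker}(\sigma_{i,i+1}-\mathrm{id})\subset(V^*)^{\otimes n}$. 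These two objects will be paired via the canonical evaluation.

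The first step is a categorical compatibility: under the natural $U_q(\gg)$-module isomorphism $(V^{\otimes n})^*\cong(V^*)^{\otimes n}$ (which reverses the order of factors, as is characteristic of Hopf-algebra duality), the transpose of $\sigma_{i,i+1}$ on $V^{\otimes n}$ corresponds to $\sigma_{n-i,n-i+1}$ on $(V^*)^{\otimes n}$. This follows from the standard identity $(S\otimes S)(R)=R$ for the universal $R$-matrix, the analogous statement for $\widehat R$ via formula (\ref{eq:hat R}), and unitarity (\ref{eq:unitary}), which together ensure that $\sigma$ — rather than $\sigma^{-1}$ — is reproduced on the dual side.

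The second step is routine linear algebra: for any endomorphism $f$ of a finite-dimensional space one has $(\mathrm{Im}\,f)^\perp=\mathrm{Ker}\,f^*$. Applied to the summands defining $I_n$ and combined with Step 1, this gives
$$\mathrm{ann}(I_n)=\bigcap_{i=1}^{n-1}\mathrm{Ker}\bigl(\sigma_{i,i+1}-\mathrm{id}\bigr)=S_\sigma^nV^*\subset(V^*)^{\otimes n},$$
the re-indexing $i\leftrightarrow n-i$ being immaterial since the intersection is taken over all $i$. Since $(V^{\otimes n}/I_n)^*\cong\mathrm{ann}(I_n)$, we obtain $(S_\sigma(V)_n)^*\cong S_\sigma^nV^*$; taking one further dual, using that everything in sight is finite-dimensional, produces the first isomorphism of (\ref{eq:double duals}). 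The exterior case is handled identically after the substitution $\sigma\to -\sigma$, which swaps the roles of the symmetric and exterior powers.

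The hard part is the compatibility in the first step: one must carefully track the reversal of tensor factors accompanying Hopf-algebra duality together with the interaction of the antipode with the factorization $\widehat R=R_1(R_1^{op}R_1)^{-1/2}$, in order to verify that it is $\sigma$ and not $\sigma^{-1}$ that arises as the transpose. Once this categorical point is secured, the remainder of the argument is a purely formal consequence of finite-dimensional duality.
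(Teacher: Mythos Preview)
The paper does not actually supply a proof of this proposition: it is quoted verbatim as \cite[Prop.~2.11 and Eq.~2.3]{BZ}, so there is nothing to compare your argument against in the present text. That said, your outline is the standard and correct way to establish the statement, and it is essentially the argument one finds in \cite{BZ}: identify $S_\sigma(V)_n$ as the quotient $V^{\otimes n}/\sum_i\mathrm{Im}(\sigma_{i,i+1}-\mathrm{id})$, identify $S_\sigma^nV^*$ as the intersection $\bigcap_i\mathrm{Ker}(\sigma_{i,i+1}-\mathrm{id})$ inside $(V^*)^{\otimes n}$, and match the two under the annihilator--quotient duality once one knows that the transpose of $\sigma_{i,i+1}$ on $V^{\otimes n}$ is $\sigma_{n-i,n-i+1}$ on $(V^*)^{\otimes n}$.

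One small remark on what you call the ``hard part'': since $\sigma_{V,V}$ is an involution (Lemma~\ref{le:symm comm constraint}), the distinction between $\sigma$ and $\sigma^{-1}$ on the dual side is moot; the genuine content is rather that $(\sigma_{V,V})^*$, transported along $(V\otimes V)^*\cong V^*\otimes V^*$, coincides with $\sigma_{V^*,V^*}$ as opposed to some other involutive $U_q(\gg)$-endomorphism. This follows from $(S\otimes S)(R)=R$ together with the fact that $D$ in (\ref{eq:D}) is built functorially from $R$, so the same identity holds for $\widehat R$. Finally, when you ``take one further dual'' you are implicitly using that $V^{**}\cong V$ in $\OO_f$; this is unproblematic here since $\OO_f$ is ribbon (or simply because it is semisimple with self-bidual simples), but it is worth saying explicitly.
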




\subsection{The Classical Limit of Braided Algebras}
\label{se:classical limit}

In this section we will discuss the specialization of the braided symmetric and exterior algebras at $q=1$, the classical limit. All of the results in this section are either well known or proved in \cite{BZ}. For a more detailed discussion of the classical limit we refer the reader to \cite[Section 3.2]{BZ}. The explicit construction of the classical limit involves the consideration of integral lattices in $U_q(\gg)$ and its modules, which we will omit here for brevity's sake (see e.g.~\cite[Section 3.2]{BZ}, or for a shorter version \cite[Section 4.3]{ZW}).

\begin{definition}
 Let $\mathcal{C}$ and $\mathcal D$ be two categories. We say that a functor $F:\mathcal C\to \mathcal D$ is  an {\it almost equivalence} of $\mathcal C$ and $\mathcal D$ if:

\noindent (a)  $F(c)\cong F(c')$ in $\mathcal D$ implies that $c\cong c'$ in  ${\mathcal C}$  for any two objects $c,c'$ of $\mathcal C$ ;

\noindent (b) for any object  $d$ in $\mathcal{D}$ there exists an object $c$ in ${\mathcal C}$ such that $F(c)\cong d$ in ${\mathcal D}$.

\end{definition} 
 
Denote by $\overline \OO_f$ the full (tensor) sub-category category of $U(\gg)-Mod$, whose objects $\overline V$ are finite-dimensional $U(\gg)$-modules having a weight decomposition $\overline V=\oplus_{\mu\in P} \overline V(\mu)$.  We have the  following fact.

 \begin{proposition}\cite[Cor 3.22]{BZ}
\label{pr:almost equivalent O bar O}
The categories  $\OO_f$ and  $\overline \OO_f$ are almost equivalent. Under this almost equivalence a simple $U_q(\gg)$-module $V_\lambda$ is mapped to the simple $U(\gg)$-module $\overline V_\lambda$.
\end{proposition}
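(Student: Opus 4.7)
The plan is to build the classical limit functor $F:\OO_f\to \overline{\OO}_f$ and then verify conditions (a) and (b) directly. First I would fix the localization $\A=\CC[q,q^{-1}]_{(q-1)}$ together with Lusztig's integral form $U_\A\subset U_q(\gg)$ generated by the divided powers $E_i^{(p)},F_i^{(p)}$ and the $K_\lambda$. For every object $V$ of $\OO_f$, the fact that $V$ is finitely generated and weight-graded yields a $U_\A$-stable $\A$-lattice $V_\A\subset V$ compatible with the weight decomposition, i.e.\ $V_\A=\bigoplus_\mu V_\A(\mu)$ with $V_\A(\mu)=V_\A\cap V(\mu)$. Setting $F(V):=\overline V:=V_\A/(q-1)V_\A$ produces a module over $U_\A/(q-1)U_\A\cong U(\gg)$; the $K_\lambda$-action $q^{(\lambda|\mu)}$ on $V(\mu)$ specializes trivially, so $\overline V$ lies in $\overline{\OO}_f$ and retains the weight-space decomposition.

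Next I would identify $F(V_\lambda)$ with $\overline V_\lambda$. Starting from the highest weight vector $v_\lambda\in V_\lambda$, one constructs $(V_\lambda)_\A$ as the $U_\A$-span of $v_\lambda$; the standard divided-power argument exhibits an $\A$-basis parametrized by the same combinatorial data as a basis of $\overline V_\lambda$. The image of $v_\lambda$ in the quotient is a non-zero highest weight vector of weight $\lambda$ in $F(V_\lambda)$, and since both $V_\lambda$ and $\overline V_\lambda$ have the character given by the Weyl character formula while the classical limit preserves weight multiplicities, we obtain $F(V_\lambda)\cong \overline V_\lambda$. This is precisely the second sentence of the proposition.

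With this identification, condition (b) reduces to semisimplicity: any $\overline W\in \overline{\OO}_f$ decomposes as $\bigoplus_i \overline V_{\lambda_i}$, and the module $W:=\bigoplus_i V_{\lambda_i}$ in $\OO_f$ then satisfies $F(W)\cong \overline W$. Condition (a) is again a character argument: if $F(V)\cong F(V')$ then $V$ and $V'$ share the same formal weight character, and since $\OO_f$ is semisimple with isomorphism classes of simples in bijection with dominant weights via the highest weight, character equality forces $V\cong V'$.

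The main obstacle is the construction and well-definedness of the lattice functor itself — one must check that $\overline V$ is independent of the chosen $U_\A$-stable lattice (any two such lattices differ by multiplication by a unit in $\A$, which reduces to a non-zero scalar modulo $(q-1)$) and that morphisms in $\OO_f$ may be rescaled to carry one chosen lattice into another, so that $F$ is genuinely a functor. This bookkeeping is the bulk of the technical work in \cite[Section 3.2]{BZ}; once it is in place, both axioms of almost equivalence follow from the character-theoretic considerations above.
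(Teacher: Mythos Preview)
The paper does not supply a proof of this proposition; it is quoted verbatim from \cite[Cor.~3.22]{BZ}, and the surrounding paragraph explicitly defers the construction of the classical limit via integral lattices to \cite[Section 3.2]{BZ}. Your outline---Lusztig integral form over $\A=\CC[q,q^{-1}]_{(q-1)}$, lattice specialization at $q=1$, identification of $F(V_\lambda)$ with $\overline V_\lambda$ by a highest-weight and character argument, and then semisimplicity to handle conditions (a) and (b)---is exactly the route taken in that reference, so there is nothing to contrast.

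One small imprecision worth flagging: it is not true that any two $U_\A$-stable lattices in $V$ differ by multiplication by a single unit of $\A$. Already in a direct sum $V_\lambda\oplus V_\mu$ one may rescale the summands independently, and even within a simple $V_\lambda$ the space of admissible lattices is larger than a single $\A^\times$-orbit. What is true, and sufficient for your purposes, is that any two such lattices have isomorphic reductions modulo $(q-1)$ as $U(\gg)$-modules; this follows from the character-preservation argument you already invoke. With that correction your sketch is sound.
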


Let $V\cong \bigoplus_{i=1}^{n} V_{\lambda_{i}}  \in \OO_{f}$.  We call $\overline V \cong \bigoplus_{i=1}^{n} \overline V_{\lambda_{i}}  \in \overline \OO_{f}$ the {\it classical limit} of $V$ under the  almost equivalence of Proposition \ref{pr:almost equivalent O bar O}.

The  following result relates the classical limit of braided symmetric algebras and Poisson algebras.

\begin{theorem}\cite[Theorem 2.29]{BZ}
\label{th: Poisson-closure}
\label{th:flat poisson closure} Let $V$ be an object of $\OO_f$ and let a  $\overline V$ in $\overline \OO_f$ be   the classical limit of $V$. Then:

\noindent(a)The classical limit $\overline{S_\sigma(V)}$ of the braided symmetric algebra $S_\sigma(V)$ is a quotient of the symmetric algebra $S(\overline V)$. In particular, $\dim_{\CC(q)} S_\sigma(V)_n=\dim_{\CC}(\overline{S_\sigma(V)})_n\le \dim_{\CC} S(\overline V)_n$.

\noindent(b)Moreover, $\overline{S_\sigma(V)}$ admits a Poisson structure defined by  $\{u,v\}=r^-(u\wedge v)$, where $r^-$ is an anti-symmetrized  classical $r$-matrix.
\end{theorem}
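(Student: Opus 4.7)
The plan is to pass to a $\CC[q^{\pm 1}]$-integral form $U'_q(\gg)$ and a $U'_q(\gg)$-stable lattice $V'\subset V$, define $\overline{S_\sigma(V)}$ as the specialization at $q=1$, and compare it with $S(\overline V)$ and with the $r$-matrix Poisson structure. The crucial preliminary observation, which feeds both parts, is that at $q=1$ the normalized braiding $\sigma_{V,V}=\tau\widehat R$ becomes the ordinary flip $\tau$, with an expansion $\sigma_{V,V}=\tau+(q-1)\tau\rho+O\bigl((q-1)^2\bigr)$ on $V'\otimes V'$; here $\rho$ comes from (\ref{eq:R1}) and (\ref{eq:hat R}), and its antisymmetric part recovers the standard classical $r$-matrix $r^-$ (the symmetric part having been absorbed into the passage from $\RR$ to $\widehat R$ via $D$).

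For part (a), the involution $\sigma_{V,V}$ has eigenvalues $\pm 1$ on $V'\otimes V'$, so the $(-1)$-eigenspace $\Lambda_\sigma^2 V$ has an integral form whose reduction modulo $(q-1)$ is contained in the $(-1)$-eigenspace of $\tau$, namely $\Lambda^2\overline V$. Since $\dim_{\CC(q)}\Lambda_\sigma^2 V=\dim_{\CC}\Lambda^2\overline V$ (both involutions share the same $\pm 1$-eigenspace dimensions, by the weight decomposition), this containment is an equality. The canonical quotient map $T(\overline V)\twoheadrightarrow \overline{S_\sigma(V)}$ therefore factors through $S(\overline V)=T(\overline V)/\langle\Lambda^2\overline V\rangle$. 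The dimension inequality follows because the $\CC[q^{\pm 1}]$-rank of each graded component of an integral form of $S_\sigma(V)$ can only drop upon reduction modulo $(q-1)$.

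For part (b), I define the bracket on generators by $\{\bar a,\bar b\}:=\overline{(q-1)^{-1}(ab-ba)}$ for $a,b\in V'$, provided $ab-ba$ is actually divisible by $q-1$ in the integral form of $S_\sigma(V)$. To confirm this, I use that the defining relations force $\sigma_{V,V}(a\otimes b)\equiv a\otimes b\pmod{\langle\Lambda_\sigma^2 V\rangle}$ (the projection to the $(-1)$-eigenspace lies in the defining ideal), and combine this with the expansion above:
$$ab-ba=(1-\tau)(a\otimes b)=(1-\sigma_{V,V})(a\otimes b)+(q-1)\tau\rho(a\otimes b)+O\bigl((q-1)^2\bigr).$$
The first term vanishes in $S_\sigma(V)$, so after dividing by $q-1$ and specializing I obtain $\{\bar a,\bar b\}=\tau\rho(\bar a\otimes\bar b)$, which by antisymmetry in $a,b$ equals $r^-(\bar a\wedge\bar b)$.

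The main obstacle is verifying that this prescription extends to a genuine Poisson structure on all of $\overline{S_\sigma(V)}$, i.e.\ that it satisfies Leibniz and Jacobi and is well-defined beyond generators. Both properties follow from the corresponding identities for the associative commutator on the integral form, together with an inductive divisibility statement: every commutator $[f,g]$ of integral elements lies in $(q-1)\cdot S_\sigma(V)'$. The base case is the degree-two computation above; the induction on total degree uses $[fg,h]=f[g,h]+[f,h]g$, while Jacobi for $\{\cdot,\cdot\}$ is the $(q-1)^2$-coefficient of the associative Jacobi identity. The delicate point is ensuring that each graded piece of the integral form carries no $(q-1)$-torsion which would obstruct division; this is precisely where the dimension equality from part (a) is used, since it guarantees that reduction modulo $(q-1)$ is a genuinely flat specialization in each degree.
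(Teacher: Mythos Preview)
The paper does not prove this theorem at all: it is quoted verbatim as \cite[Theorem 2.29]{BZ} and used as background input for the rest of the argument. There is therefore nothing in the present paper to compare your proposal against; the proof lives entirely in the cited reference.

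That said, your outline is the standard one and is essentially what the proof in \cite{BZ} amounts to: pass to a $\CC[q^{\pm1}]$-lattice, observe that $\sigma_{V,V}$ specializes to the flip $\tau$ so that the reduction of $\Lambda_\sigma^2 V$ lands in $\Lambda^2\overline V$ (with equal dimension, hence equality), and read off the Poisson bracket as the $(q-1)$-linear term of the commutator, which the normalization $\widehat R=R_1(\sqrt{R_1^{op}R_1})^{-1}$ forces to be the antisymmetrized $r$-matrix. Two small points of exposition are worth tightening. First, the sentence ``the rank can only drop upon reduction'' is misleading: a finitely generated torsion-free $\CC[q^{\pm1}]$-module is free, so rank is preserved, and the inequality $\dim \overline{S_\sigma(V)}_n\le\dim S(\overline V)_n$ comes from the surjection $S(\overline V)\twoheadrightarrow\overline{S_\sigma(V)}$, not from any rank drop. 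Second, in the last paragraph you invoke ``the dimension equality from part (a)'' to rule out $(q-1)$-torsion, but part (a) gives an inequality, not an equality; the absence of torsion is rather a consequence of the lattice being a $\CC[q^{\pm1}]$-submodule of a $\CC(q)$-vector space, which is automatic once the integral form is set up correctly.
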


For more on classical $r$-matrices and their relation to the $R$-matrices associated to quantum groups, we refer the reader to \cite{BZ}, \cite{ZW1} or \cite{ES}. 








\section{Braided Symmetric Powers of $U_q(sl_2)$-Modules}
\label{se:the class theorem}

In this section, we will prove our main result. 
  Recall that the irreducible $U_q(sl_2)$-modules are labeled by non-negative integers, and that $\dim(V_\ell)=\ell+1$. The module $V_\ell$ has a natural weight basis $v_0,v_1,\ldots, v_\ell$ such that $K(v_i)=q^{\ell-i} v_i$ and $E(v_i)=[i]_q v_{i-1}$ where $[i]_q$ denotes the quantum number  $[i]_q= \frac{q^{i}-q^{-i}}{q-q^{-1}}$.
Moreover note that it follows from  classical Lie theory and the definition of braided powers (Definition \ref{def:symmetric power}) that 
$$ V_\ell\otimes V_\ell=\Lambda^2_\sigma V_\ell\oplus S^2_\sigma V_\ell\ ,
\quad S^2_\sigma V_\ell=\bigoplus_{i=0}^{\frac{\ell}{2}}V_{2\ell-4i}\ ,\ \text{and}\quad \Lambda^2_\sigma V_\ell= \bigoplus_{i=0}^{\frac{\ell-1}{2}}V_{2\ell-2-4i}\ .$$

We will now state our main result.

\begin{theorem}
\label{th:main}
\noindent(a) Let $\ell$ be odd and let $V_\ell$ be the $(\ell+1)$-dimensional irreducible $U_q(sl_2)$-module. The $n$-th braided symmetric power splits as
$$ S^n_\sigma (V_\ell)\cong \bigoplus_{i=0}^{\frac{\ell-1}{2}} V_{n\ell-4i}\ .$$

\noindent(b)  Let $\ell$ be even and let $V_\ell$ be the $\ell+1$-dimensional irreducible $U_q(sl_2)$-module. The $n$-th braided symmetric power splits as
$$ S^n_\sigma (V_\ell)\cong \bigoplus_{i=0}^{\frac{n\ell}{4}} V_{n\ell-4i}\ .$$
\end{theorem}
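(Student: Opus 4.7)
My plan is to identify $S_q(V_{2k})$ with the $k$-th Veronese algebra $V_q(2,k)\subset S_q(V_2)\cong\CC_q[x_0,x_1,x_2]$. Proposition \ref{pr:n-th symm-power} applied to $V=V_2$, $n=k$ gives an embedding $V_{2k}\hookrightarrow S_\sigma^k V_2 = V_q(2,k)_1$; extend multiplicatively to $\widetilde\phi\colon T(V_{2k})\to V_q(2,k)$. Since $\Lambda_\sigma^2 V_{2k}=\bigoplus_{j=0}^{k-1}V_{4k-2-4j}$ and $V_q(2,k)_2=S_\sigma^{2k}V_2=\bigoplus_{i=0}^{k}V_{4k-4i}$ share no isotypic component, Schur's lemma forces $\widetilde\phi(\Lambda_\sigma^2 V_{2k})=0$, so $\widetilde\phi$ descends to $\phi\colon S_q(V_{2k})\to V_q(2,k)$. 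The same Schur argument plus non-vanishing on highest-weight vectors identifies $\phi$ restricted to $S_\sigma^2 V_{2k}$ with $V_q(2,k)_2$, so $\phi$ is surjective and $\dim S_\sigma^n V_{2k}\ge\dim V_q(2,k)_n=\binom{nk+2}{2}$. For the reverse inequality I would pass to the classical limit: Theorem \ref{th:flat poisson closure}(a) gives $\dim_{\CC(q)}S_\sigma^n V_{2k}=\dim_{\CC}\overline{S_\sigma(V_{2k})}_n$, and the classical analogue $\overline\phi$ is a surjective Poisson-algebra map to the classical Veronese $V(2,k)$, generated in degree one with the same degree-two relations; using quadraticity of $V(2,k)$ one concludes $\overline\phi$ is an isomorphism, yielding the matching upper bound. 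Thus $\phi$ is itself an isomorphism, and the stated decomposition of $S_\sigma^n V_{2k}$ follows from that of $V_q(2,k)_n = S_\sigma^{nk} V_2\cong\bigoplus_{i=0}^{\lfloor nk/2\rfloor}V_{2nk-4i}$.

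\textbf{Case (a) ($\ell$ odd).} Here $V_\ell$ is even-dimensional, and I would follow the Poisson-closure route. By Theorem \ref{th:flat poisson closure}(b), $\overline{S_\sigma(V_\ell)}$ is a Poisson quotient of $S(\overline V_\ell)$ whose bracket comes from the standard classical $r$-matrix. The outline is: (i) compute the \emph{Poisson closure} $P(\overline V_\ell)$---the minimal Poisson quotient of $S(\overline V_\ell)$ on which the $r$-matrix bracket satisfies Jacobi---using the explicit degree-three obstruction ideal of \cite{BZ}; (ii) invoke the quantum Howe duality results of \cite{Zh} (as packaged in \cite{BZ}) to identify $\overline{S_\sigma(V_\ell)}\cong P(\overline V_\ell)$ as graded Poisson algebras; (iii) decompose $P(\overline V_\ell)$ into $sl_2$-isotypic components, match with $\bigoplus_{i=0}^{(\ell-1)/2}V_{n\ell-4i}$, and transfer back to the quantum side via Proposition \ref{pr:almost equivalent O bar O} and the classical-limit dimension identity. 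The lower bound is assembled from the embedding $V_{n\ell}\hookrightarrow S_\sigma^n V_\ell$ of Proposition \ref{pr:n-th symm-power} and successive multiplication by the Casimir-type invariant in the $V_0$-summand of $S_\sigma^2 V_\ell$ (which is present precisely because $\ell$ is odd).

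\textbf{Main obstacle.} In both cases the crux is the upper bound: ruling out the many extra isotypic components that appear in the classical $S^n\overline V_\ell$ but must collapse in the braided version. For case (b) this is the injectivity of $\overline\phi$ onto the Veronese; for case (a) it requires the full force of quantum Howe duality to show $\overline{S_\sigma(V_\ell)}$ is no larger than the explicitly computable Poisson closure. The lower bounds and the surjectivity of $\phi$ are, by contrast, essentially formal consequences of the framework developed in Section \ref{se:prlime}.
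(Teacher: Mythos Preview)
Your broad outlines match the paper's two-pronged strategy (Veronese for even $\ell$, Poisson closure for odd $\ell$), but each case has a genuine gap.

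\textbf{Case (b).} The map $\phi$ is \emph{not} surjective onto $V_q(2,k)$: in degree~$1$ its image is only the top summand $V_{2k}\subsetneq V_q(2,k)_1=\bigoplus_{i\le k/2}V_{2k-4i}$. The paper therefore works with the subalgebra $A_q(2,k)$ generated by $V_{2k}$, and shows separately (Lemma~\ref{le:V generates sub Vero}(a)) that $A_q(2,k)_n=V_q(2,k)_n$ for $n\ge 2$. More importantly, your injectivity step (``using quadraticity of $V(2,k)$'') does not work: $V(2,k)$ is quadratic over $T(V(2,k)_1)$, but over the smaller $T(\overline V_{2k})$ the algebra $A(2,k)$ requires \emph{cubic} relations (Lemma~\ref{le:V generates sub Vero}(b)). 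The paper closes this gap not via the classical limit but by invoking the already-known decomposition of $S_\sigma^3 V_\ell$ from \cite{BZ} (Proposition~\ref{pr:symmetric cubes}): since $\phi$ is an isomorphism in degrees $2$ and $3$ and the kernel is generated in degrees $\le 3$, the kernel vanishes.

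\textbf{Case (a).} Your lower-bound construction is based on a factual error: for $\ell$ odd the symmetric square $S_\sigma^2 V_\ell=\bigoplus_{i=0}^{(\ell-1)/2}V_{2\ell-4i}$ has bottom summand $V_2$, not $V_0$; the invariant $V_0$ appears precisely when $\ell$ is \emph{even}. The paper instead produces, for each $i\le(\ell-1)/2$, the explicit highest-weight vector $\mathbf v_i\otimes v_0^{\otimes n-2}$ (with $\mathbf v_i\in S_\sigma^2 V_\ell$ of weight $2\ell-4i$) and uses the Howe-duality machinery of \cite{BZ} to show it survives in $S_\sigma(V_\ell)_n$. For the upper bound, your step~(ii) is not a proof but a restatement of the goal: identifying $\overline{S_\sigma(V_\ell)}$ with the Poisson closure is in fact a \emph{corollary} of the theorem (Section~\ref{se:Poisson closure}), not an input. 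The paper's actual argument is a concrete computation: the Jacobian map $\mathbf c=E\wedge F\wedge H:\Lambda^3\overline V_\ell\to S^3\overline V_\ell$ sends each basis vector $\overline v_a\wedge\overline v_b\wedge\overline v_c$ to an element with terminal monomial $\overline v_{a+1}\overline v_b\overline v_{c-1}$ (here oddness of $\ell$ is used to ensure the coefficient $(\ell-2b)\ne 0$), and a direct count of complementary monomials gives exactly $\binom{\ell+2}{2}+(n-2)\binom{\ell+1}{2}$, matching the lower bound.
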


\begin{remark}
The above decomposition was originally conjectured in \cite[Conjecture 2.37]{BZ}, based on computer calculations. \end{remark}

\begin{proof}
We first recall the main result of the paper \cite{BZ} which established the Theorem in the case $n=3$.

\begin{proposition} \cite[Theorem 2.35]{BZ}
\label{pr:symmetric cubes}
\noindent(a) Let $\ell$ be odd and let $V_\ell$ be the $\ell+1$-dimensional irreducible $U_q(sl_2)$-module. The third braided symmetric power splits as
$$ S^3_\sigma (V_\ell)\cong \bigoplus_{i=0}^{\frac{\ell-1}{2}} V_{3\ell-4i}\ .$$

\noindent(b)  Let $\ell$ be even and let $V_\ell$ be the $\ell+1$-dimensional irreducible $U_q(sl_2)$-module. The third braided symmetric power splits as
$$ S^3_\sigma (V_\ell)\cong \bigoplus_{i=0}^{\frac{3\ell}{4}} V_{3\ell-4i}\ .$$
\end{proposition}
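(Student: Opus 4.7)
The proof splits according to the parity of $\ell$, following the two strategies outlined in the introduction. The base cases $n=2$ (recalled at the start of the section) and $n=3$ (Proposition \ref{pr:symmetric cubes}) are known, so the substance lies at $n\ge 4$.

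For \textbf{Case (a)}, where $\ell$ is odd and $V_\ell$ is even-dimensional, I would work through the classical limit. Theorem \ref{th: Poisson-closure} identifies $\overline{S_\sigma(V_\ell)}$ as a Poisson quotient of $S(\overline V_\ell)$, with bracket induced by the antisymmetrized classical $r$-matrix. The key step is to prove that this quotient coincides with the Poisson closure of $S(\overline V_\ell)$, i.e.\ the smallest quotient on which the naive skew $r$-bracket becomes a genuine Poisson bracket. The generators of the Jacobi-obstruction ideal in degree three were computed in \cite{BZ} and already produce precisely the $S^3_\sigma V_\ell$ of Proposition \ref{pr:symmetric cubes}; I would then invoke quantum Howe duality from \cite{Zh} and \cite{BZ} for the $(sl_2,sl_2)$ dual pair to show that these degree-three relations generate the whole Poisson-closure ideal, so that the Poisson closure decomposes as stated in part (a). Theorem \ref{th: Poisson-closure}(a) then transfers this character back from $\overline{S_\sigma(V_\ell)}$ to $S_\sigma(V_\ell)$.

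For \textbf{Case (b)}, where $\ell$ is even and $V_\ell=V_{2k}$ is odd-dimensional, the plan is to realize $S_\sigma(V_{2k})$ as a subalgebra of the $k$-th Veronese algebra $V_q(2,k)$ of the quantum polynomial ring $S_\sigma(V_2)\cong \CC_q[x_0,x_1,x_2]$. By Proposition \ref{pr:n-th symm-power} the top irreducible $V_{2k}$ sits inside $S^k_\sigma V_2 = V_q(2,k)_1$; since $V_q(2,k)$ inherits braided commutativity from $S_\sigma(V_2)$, the defining relations $\Lambda^2_\sigma V_{2k}$ vanish in $V_q(2,k)_2=S^{2k}_\sigma V_2$, so the inclusion $V_{2k}\hookrightarrow V_q(2,k)_1$ extends by the universal property of the braided symmetric algebra to a graded $U_q(sl_2)$-algebra homomorphism $\varphi: S_\sigma(V_{2k})\to V_q(2,k)$ with image a subalgebra $A$. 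Using the decomposition of $V_q(2,k)_n=S^{nk}_\sigma V_2$ from Appendix \ref{ap: Veronese}, I would identify $A$ degree by degree and verify that its character equals $\bigoplus_{i=0}^{nk/2} V_{2nk-4i}$. Surjectivity of $\varphi$ onto $A$ supplies the lower bound for $S^n_\sigma V_{2k}$; the matching upper bound I would obtain by passing to the classical limit, where $\overline{S_\sigma(V_{2k})}$ is a Poisson quotient of $S(\overline V_{2k})$ whose graded character is cut down by the explicit degree-three relations of \cite{BZ} to exactly the dimension of $A$, forcing $\varphi$ to be an isomorphism.

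The main obstacle I expect is in Case (a): upgrading ``$\overline{S_\sigma(V_\ell)}$ is \emph{a} Poisson quotient'' to ``$\overline{S_\sigma(V_\ell)}$ is \emph{the} Poisson closure''. A priori, higher-degree Poisson relations could trim the closure further than the degree-three obstructions of \cite{BZ} do, and ruling this out is exactly where the Howe-duality input from \cite{Zh} is needed. In Case (b) the geometric picture (Veronese subalgebras of the quantum plane) is essentially explicit, so the remaining work reduces to matching graded characters.
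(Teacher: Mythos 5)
The statement you were asked to prove is Proposition \ref{pr:symmetric cubes}, the decomposition of the \emph{third} braided symmetric power $S^3_\sigma(V_\ell)$. Your proposal treats precisely this case as a known input (``the base cases $n=2$ \dots and $n=3$ (Proposition \ref{pr:symmetric cubes}) are known'') and then sketches the passage to $n\ge 4$; as an argument for the stated proposition it is therefore circular. Both halves of your sketch in fact lean on the cube case: in case (a) the assertion that the degree-three Jacobi obstructions ``already produce precisely the $S^3_\sigma V_\ell$ of Proposition \ref{pr:symmetric cubes}'' presupposes knowing $S^3_\sigma V_\ell$, and in case (b) the identification of the image subalgebra $A\subset V_q(2,k)$ with $S_\sigma(V_{2k})$ in degrees $2$ and $3$ --- which is what lets one control the quadratic and cubic relations of the Veronese subalgebra --- is exactly the content of the proposition. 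For the record, the paper itself offers no proof either: it imports the result verbatim from \cite[Theorem 2.35]{BZ}.

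A genuine proof of the $n=3$ case requires the Howe-duality machinery of \cite[Section 3.4]{BZ}: one identifies the space of highest weight vectors of weight $3\ell-4i$ in $V_\ell^{\otimes 3}$ with the zero-weight space of the $gl_3$-module of highest weight $(3\ell-2i,2i,0)$, constructs the two bases $\BB_1$ and $\BB_2$ adapted to $S^2_\sigma V_\ell\otimes V_\ell$ and $V_\ell\otimes S^2_\sigma V_\ell$ respectively, and shows that the vectors spanning $\bigl(\Lambda^2_\sigma V_\ell\otimes V_\ell+V_\ell\otimes\Lambda^2_\sigma V_\ell\bigr)\cap\bigl(V_\ell^{\otimes 3}(3\ell-4i)\bigr)^+$ leave a complement of exactly the predicted size. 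None of this appears in your proposal. Read instead as a blueprint for Theorem \ref{th:main} at general $n$, your two-pronged strategy --- Poisson closure and the Jacobian ideal for $\ell$ odd, the quantum Veronese algebra $V_q(2,\tfrac{\ell}{2})$ for $\ell$ even --- does broadly match the paper's, but that is a different statement from the one at hand, and even there the upper bound in the odd case is obtained in the paper by an explicit terminal-monomial count in the Jacobian ideal rather than by a further appeal to Howe duality.
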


  \subsection{Proof of  Theorem \ref{th:main} (a)}
 Note  that the assertion holds in the case $n=2$, and in the case $n=3$ by Proposition \ref{pr:symmetric cubes}(a).

We first prove that one can find copies of the modules $V_{n\ell-4i}$, $i=0,\ldots, \frac{\ell-1}{2}$ in $S_\sigma (V_\ell)_n$. Observe that if $v\in V_\ell\otimes V_\ell$ is a highest weight vector of weight $m\in\ZZ$  , then $v\otimes v_0^{\otimes n-2}\in V_\ell^{\otimes n}$ is a highest weight vector  and its weight  is $(m+(n-2)\ell)$. We obtain the following result.

\begin{proposition}
\label{pr: hishgest weights in Sqs}
  Let ${\bf v}_i$ be a highest weight vector in $S^2_\sigma V_\ell\subset V_\ell\otimes V_\ell$ of weight $2\ell-4i$ for $0\le i\le \frac{\ell-1}{2}$. Let  ${\bf v}_i^n={\bf v}_i\otimes v_0^{\otimes n-2}\in V_\ell^{\otimes n}$.

\noindent(a) ${\bf v}_i^n$ is a highest weight vector of weight $n\ell-4i$.

\noindent(b) Moreover, ${\bf v}_i^n  \notin  \langle \Lambda^2_\sigma V_\ell\rangle_n\subset V_\ell^{\otimes n}$  
where, as above, $\langle \Lambda^2_\sigma V_\ell\rangle$ denotes the ideal generated by $\Lambda^2_\sigma V_\ell$ in $T(V_\ell)$.

\end{proposition}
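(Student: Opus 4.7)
For (a), view $V_\ell^{\otimes n}=(V_\ell\otimes V_\ell)\otimes V_\ell^{\otimes(n-2)}$ and apply $\Delta(E)=E\otimes 1+K_{\alpha}\otimes E$ to ${\bf v}_i\otimes v_0^{\otimes(n-2)}$: the first summand $(E{\bf v}_i)\otimes v_0^{\otimes(n-2)}$ vanishes because ${\bf v}_i$ is highest weight in $V_\ell\otimes V_\ell$, and the second summand $(K_{\alpha}{\bf v}_i)\otimes E(v_0^{\otimes(n-2)})$ vanishes because $v_0^{\otimes(n-2)}$ is highest weight in $V_\ell^{\otimes(n-2)}$ (by the same argument iterated). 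Additivity of the $K_{\alpha}$-weight gives $(2\ell-4i)+(n-2)\ell=n\ell-4i$.

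For (b), the plan is to produce a linear projection that separates ${\bf v}_i^n$ from the ideal. Define $\pi:V_\ell^{\otimes n}\to V_\ell\otimes V_\ell$ by $\pi=\mathrm{id}\otimes\mathrm{id}\otimes q_0^{\otimes(n-2)}$, where $q_0:V_\ell\to\CC(q)$ is the coordinate functional $q_0(v_j)=\delta_{j,0}$. Then $\pi({\bf v}_i^n)={\bf v}_i$. The key observation is that $\pi(V_\ell^{\otimes k}\otimes \Lambda^2_\sigma V_\ell\otimes V_\ell^{\otimes(n-2-k)})=0$ for $k\ge 2$: such a projection would require $q_0\otimes q_0$ to be nonzero on an element of $\Lambda^2_\sigma V_\ell$, but $v_0\otimes v_0$ is the highest weight vector of $V_{2\ell}\subset S^2_\sigma V_\ell$ and has trivial $\Lambda^2_\sigma V_\ell$-component. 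Consequently
\[
\pi(\langle\Lambda^2_\sigma V_\ell\rangle_n)\subseteq \Lambda^2_\sigma V_\ell + V_\ell\otimes p_1(\Lambda^2_\sigma V_\ell),
\]
where $p_1=\mathrm{id}\otimes q_0$.

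The remaining task, and the main obstacle, is verifying ${\bf v}_i\notin \Lambda^2_\sigma V_\ell + V_\ell\otimes p_1(\Lambda^2_\sigma V_\ell)$. A weight count settles $i=0$, where the right-hand side has maximum weight $2\ell-2$ while ${\bf v}_0=v_0\otimes v_0$ has weight $2\ell$. For $i\ge 1$ a sharper argument is required, since a priori ${\bf v}_i$ can be assembled as a sum from those two summands. The plan is then to proceed by induction on $n$ with base case $n=3$ from Proposition~\ref{pr:symmetric cubes}: the class of ${\bf v}_i^n$ in $S_\sigma(V_\ell)_n$ is the product of $[{\bf v}_i^{n-1}]$ with the class $x_0$ of $v_0$, and by Clebsch--Gordan this is the highest weight vector of the top summand $V_{n\ell-4i}\subset V_{(n-1)\ell-4i}\otimes V_\ell$. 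Its non-vanishing is then established by pairing against a suitable element of $S^n_\sigma V_\ell$ via the self-duality $S_\sigma(V_\ell)_n\cong(S^n_\sigma V_\ell^{*})^{*}$ from equation~(\ref{eq:double duals}), or by degenerating to $q=1$ and invoking the Poisson-closure description of $\overline{S_\sigma(V_\ell)}$ outlined in the introduction.
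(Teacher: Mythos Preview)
Your argument for (a) is correct and matches the paper's. The projection $\pi=\mathrm{id}^{\otimes 2}\otimes q_0^{\otimes(n-2)}$ in (b) is a clean idea, and the computation $\pi(\langle\Lambda^2_\sigma V_\ell\rangle_n)\subseteq \Lambda^2_\sigma V_\ell+V_\ell\otimes p_1(\Lambda^2_\sigma V_\ell)$ is correct, as is the weight count for $i=0$.

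The gap is in the case $i\ge 1$. You invoke Proposition~\ref{pr:symmetric cubes} as the base case $n=3$, but that proposition only gives the \emph{decomposition} $S_\sigma(V_\ell)_3\cong\bigoplus V_{3\ell-4i}$; it does not say that the particular vector ${\bf v}_i^3={\bf v}_i\otimes v_0$ has nonzero image. The space of highest weight vectors of weight $3\ell-4i$ in $V_\ell^{\otimes 3}$ is $(2i{+}1)$-dimensional and the intersection with $\langle\Lambda^2_\sigma V_\ell\rangle_3$ is $2i$-dimensional, so one still has to verify that ${\bf v}_i^3$ avoids this specific hyperplane. Your proposed inductive step has the same difficulty: even granting $[{\bf v}_i^{n-1}]\neq 0$, the nonvanishing of $[{\bf v}_i^{n-1}]\cdot x_0$ is not automatic, and both of your suggested remedies are circular. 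The Poisson-closure identification $\overline{S_\sigma(V_\ell)}\cong S(\overline V_\ell)/J$ is established \emph{after} this proposition (the dimension inequality from the Jacobian side gives only an upper bound; the matching lower bound is exactly what this proposition supplies). Likewise, the pairing with $S^n_\sigma V_\ell$ requires knowing that $V_{n\ell-4i}\hookrightarrow S^n_\sigma V_\ell$ for $i\ge 1$, which is the content of the main theorem.

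The paper closes this gap differently: it reduces to $n=3$ and then appeals to the explicit Howe-duality bases from \cite[Section~3.4]{BZ}, identifying the highest weight space $\bigl(V_\ell^{\otimes 3}(3\ell-4i)\bigr)^+$ with a $gl_3$-weight space equipped with two distinguished bases $\BB_1,\BB_2$. In those coordinates ${\bf v}_i^3$ corresponds to the extremal vector $b_1^1$, and \cite[Theorem~3.43]{BZ} shows that $\{b_1^1\}$ is linearly independent from the set of even-indexed basis vectors spanning $\langle\Lambda^2_\sigma V_\ell\rangle_3\cap\bigl(V_\ell^{\otimes 3}(3\ell-4i)\bigr)^+$. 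This is precisely the missing piece: singling out ${\bf v}_i^3$ among the $2i{+}1$ highest weight vectors and proving it is not in the $2i$-dimensional kernel.
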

\begin{proof}
Prove (a) first. Observe that if $v\in V_\ell\otimes V_\ell$ is a highest weight vector of weight $m\in\ZZ$  , then $v\otimes v_0^{\otimes n-2}\in V_\ell^{\otimes n}$ is a highest weight vector  and its weight  is $(m+(n-2)\ell)$.  Part(a) follows.

Prove (b) next.
 The vector ${\bf v}_i^n$ is clearly invariant under all $\sigma_{j,j+1}$ for $j\ge 3$, hence it suffices to prove the assertion  in the case $n=3$. To do so, we have to employ the results and methods from \cite[Section 3.4]{BZ}, where all notions and constructions are explained in detail. However, it  seems to long to introduce here (approximately ten pages), so we will recall only the most important results. Using Howe duality, we identify the set $\left(V_\ell^{\otimes 3}(3\ell-4i)\right)^+$ of highest weight vectors of weight $3\ell-4i$ with the weight space $V_{3\ell-2i,2i,0}({\bf0})$ of the $gl_3$-module of highest weight $(3\ell-2i,2i,0)$. Note that $V_{3\ell-2i,2i,0}({\bf0})$ has odd dimension. We additionally construct two bases $\BB_1=\{b_1^1,\ldots, b_{2m+1}^1\}$ and $\BB_2=\{b_1^2,\ldots, b_{2m+1}^2\}$ in $\left(V_\ell^{\otimes 3}(3\ell-4i)\right)^+$  such that $S^2_\sigma V_\ell\otimes V\cap V_\ell^{\otimes 3,+}(3\ell-4i)$ is spanned by $\{b_1^1, b_3^1,\ldots, b_{2m+1}^1\}$ and   $\Lambda^2_\sigma V_\ell\otimes V\cap V_\ell^{\otimes 3,+}(3\ell-4i)$ is spanned by $\{b_2^1, b_4^1,\ldots, b_{2m}^1\}$, whereas $V\otimes  S^2_\sigma V_\ell\cap \left(V_\ell^{\otimes 3}(3\ell-4i)\right)^+$ and $V\otimes  \Lambda^2_\sigma V_\ell\cap \left(V_\ell^{\otimes 3}(3\ell-4i)\right)^+$ are spanned by $\{b_1^2, b_3^2,\ldots, b_{2m+1}^2\}$, resp.~$\{b_2^2, b_4^2,\ldots, b_{2m}^2\}$,  for more details see \cite[Theorem 3.40 and Lemma 3.55]{BZ}. Moreover, we can identify ${\bf v}_i^3$ with the basis vector $b_1^1$, since it corresponds to the $\alpha_1$-string of minimal length passing through $V_{(3\ell-2i,2i,0)}({\bf0})$ (see \cite[(3.19)]{BZ}).  Note that \cite[Theorem 3.43]{BZ} implies that the set 
 $\{b_2^1, b_4^1,\ldots, b_{2m}^1, b_2^2, b_4^2,\ldots, b_{2m}^2\}$ spans 
 \begin{equation}
 \label{eq:}
 \left(\Lambda^2_\sigma V_\ell\otimes V+V\otimes  \Lambda^2_\sigma V_\ell\right)\cap \left(V_\ell^{\otimes 3}(3\ell-4i)\right)^+=\langle  \Lambda^2_\sigma V_\ell\rangle_3\cap \left(V_\ell^{\otimes 3}(3\ell-4i)\right)^+\ne \left(V_\ell^{\otimes 3}(3\ell-4i)\right)^+\  .
 \end{equation} 
  We then obtain from \cite[Theorem 3.43]{BZ} that   the set   $ \{b_1^1\}\cup\{b_2^1, b_4^1,\ldots, b_{2m}^1, b_2^2, b_4^2,\ldots, b_{2m}^2\}$ is linearly independent. Equation  and hence that  ${\bf v}_i^3\notin \langle  \Lambda^2_\sigma V_\ell\rangle_3$. Proposition \ref{pr: hishgest weights in Sqs} is proved. 
 \end{proof}
 
 Denote by $W_n=\bigoplus_{i=0}^{\frac{\ell-1}{2}} V_{n\ell-4i}$ and by  $\overline W_n=\bigoplus_{i=0}^{\frac{\ell-1}{2}} \overline  V_{n\ell-4i}$.   
 Proposition \ref{pr: hishgest weights in Sqs}  yields  natural embeddings 
 \begin{equation}
 \label{eq:embedding}
 W_n \hookrightarrow S_\sigma (V_\ell)_n\ ,\quad  \overline W_n=\bigoplus_{i=0}^{\frac{\ell-1}{2}} \overline  V_{n\ell-4i}\hookrightarrow  \overline {S_\sigma (V_\ell)}_n\ .\end{equation}

Suppose from now on that $\ell$ is odd.  We have the following fact.
  \begin{lemma}
  \label{le:dimensions lower}
 (a)   $dim_{\CC(q)}(W_n)=dim_{\CC}(\overline W_n)=\binom{\ell+2}{2}+(n-2)\binom{\ell+1}{2}$.\\
  
  (b) $dim_{\CC(q)}(S_\sigma (V_\ell)_n)=dim_\CC(\overline {S_\sigma (V_\ell)}_n)\ge \binom{\ell+2}{2}+(n-2)\binom{\ell+1}{2}$. 
  \end{lemma}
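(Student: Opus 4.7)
The plan for part (a) is a straightforward arithmetic computation. Since the almost equivalence of Proposition \ref{pr:almost equivalent O bar O} identifies each $V_k$ with $\overline V_k$ and preserves dimension on each simple factor (in both worlds $\dim V_k = k+1$), the two sides of the equality in (a) agree as soon as either is computed. I therefore evaluate
$$\dim W_n=\sum_{i=0}^{(\ell-1)/2}(n\ell - 4i + 1).$$
This is an arithmetic progression with $\frac{\ell+1}{2}$ terms, first term $n\ell+1$ and last term $(n-2)\ell+3$, so the sum equals $\frac{\ell+1}{2}\bigl((n-1)\ell+2\bigr)$. A short factoring rewrites this as $\frac{(\ell+1)(\ell+2)}{2}+(n-2)\frac{\ell(\ell+1)}{2}=\binom{\ell+2}{2}+(n-2)\binom{\ell+1}{2}$, which is the formula in (a).

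For part (b), the equality $\dim_{\CC(q)}S_\sigma(V_\ell)_n=\dim_{\CC}\overline{S_\sigma(V_\ell)}_n$ is already recorded in Theorem \ref{th: Poisson-closure}(a). The inequality then follows from the embedding $W_n\hookrightarrow S_\sigma(V_\ell)_n$ displayed in (\ref{eq:embedding}), which is itself an immediate consequence of Proposition \ref{pr: hishgest weights in Sqs}: for each $i\in\{0,\dots,(\ell-1)/2\}$ the image of ${\bf v}_i^n$ in the quotient $S_\sigma(V_\ell)_n=V_\ell^{\otimes n}/\langle \Lambda^2_\sigma V_\ell\rangle_n$ is a nonzero highest weight vector of weight $n\ell-4i$, hence generates a copy of $V_{n\ell-4i}$; since the weights $n\ell-4i$ are pairwise distinct, the resulting submodules live in distinct isotypic components of $S_\sigma(V_\ell)_n$ and their sum realises a copy of $W_n$. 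Combining this embedding with the explicit count from (a) delivers the lower bound in (b).

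I do not expect any real obstacle at this step: the lemma is a bookkeeping device that converts the existence statement of Proposition \ref{pr: hishgest weights in Sqs} into a numerical lower bound on the graded dimensions of $S_\sigma(V_\ell)$. The substantial work still to come is the matching \emph{upper} bound, which, according to the strategy sketched in the introduction, will be obtained by constructing a surjective $U_q(sl_2)$-module algebra map $S_\sigma(V_\ell)\twoheadrightarrow A\subset V_q(2,k)$ onto a suitable subalgebra of a Veronese algebra; together with Lemma \ref{le:dimensions lower} this will force equality and hence pin down the isotypic decomposition asserted in Theorem \ref{th:main}(a).
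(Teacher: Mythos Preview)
Your proof of the lemma is correct and proceeds exactly as the paper does: a direct summation for (a), and the embedding \eqref{eq:embedding} together with Theorem~\ref{th: Poisson-closure}(a) for (b). One correction to your closing commentary, however: for odd $\ell$ the paper does \emph{not} obtain the matching upper bound via a Veronese algebra map; that route is reserved for even $\ell$ (Theorem~\ref{th:main}(b)). In the odd case the upper bound comes instead from the Poisson-closure/Jacobian-ideal argument---one shows that enough monomials in $S(\overline V_\ell)_n$ arise as leading terms of elements of the Jacobian ideal $J$, so that $\dim_\CC (S(\overline V_\ell)/J)_n \le |T_{n,\ell}| = \binom{\ell+2}{2}+(n-2)\binom{\ell+1}{2}$, and then uses $\dim \overline{S_\sigma(V_\ell)}_n \le \dim (S(\overline V_\ell)/J)_n$.
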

  \begin{proof}
  We compute
  $$dim_{\CC(q)}(W_n)=\sum_{i=0}^{\frac{\ell-1}{2}} (n\ell+1-4i)= 
  \sum_{i=0}^{\frac{\ell-1}{2}} (n-2)\ell+\sum_{i=0}^{\frac{\ell-1}{2}} (2\ell+1-4i)     $$
  $$   = (n-2)\ell\frac{\ell+1}{2} +(\ell+2) \frac{\ell+1}{2}+(\ell-1)\frac{\ell+1}{2}+4\binom{\frac{\ell+1}{2}}{2}=(n-2)\binom{\ell+1}{2}+\binom{\ell+2}{2}\ .$$
   We compute $dim_{\CC}(\overline W_n)$, analogously. Part (b) follows from the embeddings in Equation \ref{eq:embedding}.  
  \end{proof}

Now, let $\gg$ be a finite-dimensional complex  simple Lie algebra and let ${\bf c}\in \Lambda^3(\gg)$ be the canonical element associated to the Lie bracket (see e.g.~\cite{ZW}). If $ \overline V$ is a $U(\gg)$-module, then $\gg^{\otimes 3} \subset U(\gg)^{\otimes 3} $ acts component-wise on $ \overline V^{\otimes 3}$. This action defines a map ${\bf c}: \overline  V^{\otimes 3}\to  \overline V^{\otimes 3}$. It is easy to see that $ {\bf c}(\Lambda^3 \overline  V)\subset (S  (\overline V))_3$ . We call this map the {\it Jacobian} map.  It is well known that ${\bf c}$ is $\gg$-invariant and we showed in \cite{ZW} that the Jacobian map is therefore a homomorphism of $U(\gg)$-modules.
We prove in \cite{ZW} and \cite{BZ} that its image generates the {\it Jacobian ideal} $J$, the obstruction which prevents the bracket defined by $r^-$ from satisfying the Jacobi identity (see also Section \ref{se:Poisson closure}). The ideal $J$ is clearly a $\ZZ$-graded $U(\gg)$-module ideal.  We refer to the quotient $S( \overline V)/J$ as the {\it Poisson closure} and we showed in \cite{BZ} that the natural projection map $S(V)\to \overline{S_\sigma(V)}$ factors through  $S( \overline V)/J$. 
Hence, $dim_\CC(S( \overline V)/J)_n)\ge dim_\CC(\overline{S_\sigma(V)}_n)$.

Now, let $\gg=sl_2$. Then $r^-=E\wedge F$ and ${\bf c}=E\wedge F\wedge H$ (see e.g.~\cite{BZ}). Let $\overline V_\ell$ be the standard $(\ell+1)$-dimensional simple $U(sl_2)$-module and let $\overline v_0,\overline v_1,\ldots,\overline v_\ell$ be the standard weight basis of  $\overline V_\ell$; i.e., $H(\overline v_i)=(\ell-2i) \overline v_i$, $E(\overline v_i)=i \overline v_{i-1}$ and $F(\overline v_i)=(\ell-i)\overline v_{i+1}$ (here we assume that $\overline v_{-1}=\overline v_{\ell+1}=0$).
We can now define for $k\ge 0$ the standard weight basis of $\Lambda^k \overline V_\ell$ resp.~$(S( \overline V_\ell))_k$  as
$\{\overline v_{i_1}\wedge \overline v_{i_2}\wedge \ldots\wedge \overline v_{i_k}|0\le i_1<i_2<\ldots<i_k\le \ell\}$, resp.~$\{\overline v_{i_1}\cdot \overline v_{i_2}\cdot \ldots\cdot \overline v_{i_k}|0\le i_1\le i_2\le\ldots\le i_k\le \ell\}$. Consider the  lexicographic ordering on the basis of $(S( \overline V_\ell))_k$. Denote by $Te(\overline x)$ the least or terminal monomial of an element $\overline x\in (S(V_\ell))_k$. We have the following fact.

\begin{lemma}
Let $\ell$ be odd. Then the Jacobian map ${\bf c}: \Lambda^3 \overline  V\to  (S  (\overline V))_3 $ is injective and if ${\bf \overline v}=\overline v_a\wedge \overline v_b\wedge  \overline v_c$ with $0\le a<b<c\le \ell$ then   $Te({\bf c}({\bf \overline v}))=\overline v_{a+1}\overline v_{b}\overline v_{c-1}$.
\end{lemma}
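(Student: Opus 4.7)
My plan is to compute ${\bf c}(\overline v_a\wedge \overline v_b\wedge \overline v_c)$ explicitly in $(S(\overline V_\ell))_3$, extract its terminal monomial with a nonzero coefficient, and then conclude injectivity via the standard leading-term argument. Using the action formulas $E\overline v_i=i\overline v_{i-1}$, $F\overline v_i=(\ell-i)\overline v_{i+1}$, $H\overline v_i=(\ell-2i)\overline v_i$, I expand ${\bf c}=E\wedge F\wedge H$ as a signed sum over the six permutations of the triple $(E,F,H)$ acting componentwise on $\overline v_a\otimes \overline v_b\otimes \overline v_c$, and project each term to $(S(\overline V_\ell))_3$. This yields a signed sum of six monomials of the form $\overline v_{a+\epsilon_1}\overline v_{b+\epsilon_2}\overline v_{c+\epsilon_3}$, with $(\epsilon_1,\epsilon_2,\epsilon_3)$ ranging over the six permutations of $(-1,0,+1)$, each with an explicit scalar coefficient read off from the action formulas.

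Next, I identify the terminal monomial by comparing the sorted index tuples in lex order. The permutation $(F,H,E)$ produces the monomial $\overline v_{a+1}\overline v_b\overline v_{c-1}$ with coefficient $(\ell-a)(\ell-2b)c$, whose sorted tuple $(a+1,b,c-1)$ always has first entry $a+1$, the largest occurring among the six terms; the only other term whose sorted first entry can equal $a+1$ arises from $(F,E,H)$ and has tuple $(a+1,b-1,c)$, which is dominated in the second slot. Hence $(a+1,b,c-1)$ is lex-extremal. Its coefficient is nonzero under the hypotheses: $a\le\ell-2$ and $c\ge2$ give $\ell-a>0$ and $c>0$, while $\ell-2b\ne0$ precisely because $\ell$ is odd and cannot equal the even integer $2b$. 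A direct inspection of sorted tuples confirms that no other of the six monomials coincides with $\overline v_{a+1}\overline v_b\overline v_{c-1}$ after sorting, so the coefficient $(\ell-a)(\ell-2b)c$ on the terminal monomial is genuinely nonzero. I then handle the boundary cases $a=0$, $c=\ell$, $b=a+1$, and $c=b+1$, where some non-terminal terms either vanish (because $\overline v_{-1}=\overline v_{\ell+1}=0$) or have sorted tuples that collapse; in every case the terminal monomial and its coefficient are unaffected.

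Finally, the map $(a,b,c)\mapsto(a+1,b,c-1)$ is plainly injective on $\{(a,b,c):0\le a<b<c\le\ell\}$, so distinct basis wedges of $\Lambda^3\overline V_\ell$ are sent by ${\bf c}$ to elements of $(S(\overline V_\ell))_3$ with distinct terminal monomials. Since distinct monomials are linearly independent in $S(\overline V_\ell)$, the usual leading-term argument yields the linear independence of the images, proving that ${\bf c}$ is injective on $\Lambda^3\overline V_\ell$. The main obstacle is the combinatorial bookkeeping needed in the boundary cases: one must verify that no degenerate configuration of $(a,b,c)$ causes two of the six monomials to merge in a way that might cancel the coefficient $(\ell-a)(\ell-2b)c$ on the terminal monomial. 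The odd-parity hypothesis on $\ell$ enters essentially at exactly one point, namely in the non-vanishing of the middle factor $\ell-2b$; without it the leading-term argument would collapse for $b=\ell/2$.
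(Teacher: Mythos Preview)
Your proposal is correct and follows essentially the same approach as the paper: compute ${\bf c}(\overline v_a\wedge\overline v_b\wedge\overline v_c)$ explicitly as a signed sum of six monomials, identify $\overline v_{a+1}\overline v_b\overline v_{c-1}$ as the lex-extremal term with nonzero coefficient $(\ell-a)(\ell-2b)c$ (using oddness of $\ell$ precisely for the middle factor), and deduce injectivity from the injectivity of $(a,b,c)\mapsto(a+1,b,c-1)$ on sorted triples. Your treatment is more careful than the paper's in verifying that no boundary degeneration causes cancellation on the terminal monomial, but the underlying argument is the same.
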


\begin{proof}

The second assertion follows from the following direct computation:
$$ {\bf c}({\bf \overline v})=(E\wedge F\wedge H)({\bf \overline v})=$$
 $$=(\ell-a)(\ell-2b)c\  \overline v_{a+1}\overline v_{b}\overline v_{c-1}+(\ell-2a)(\ell-b)c\  \overline v_{a}\overline v_{b+1}\overline v_{c-1} +(\ell-a)b(\ell-2c)  \overline v_{a+1}\overline v_{b-1}\overline v_{c}$$
$$ +a(\ell-b)(\ell-2c) \overline v_{a-1}\overline v_{b+1}\overline v_{c}+(\ell-2a)b(\ell-c) \overline v_{a}\overline v_{b+1}\overline v_{c-1}+a(\ell-2b)(\ell-c)\overline v_{a-1}\overline v_{b}\overline v_{c+1}\ .$$
Clearly, $Te({\bf c}({\bf \overline v}))=\overline v_{a+1}\overline v_{b}\overline v_{c-1}$,  as the coefficient $(\ell-a)(\ell-2b)c\ne 0$ for odd $\ell$. Indeed,  one has $(\ell-2b)\ne 0$,  $a\le \ell-2$ and $c\ge 2$.    

The first assertion now follows directly from the fact that the map $\ZZ^3\to \ZZ^3$ which sends $(a,b,c)$ to $(a-1,b,c+1)$ is a bijection. Hence, the set 
$$\{ {\bf c}({\bf \overline v})|{\bf \overline v}=\overline v_a\wedge \overline v_b\wedge \overline v_c, 0\le a<b<c\le \ell\}\subset (S(\overline V_\ell))_3$$ consisting of  images of the standard basis vectors of $\Lambda^3\overline V_\ell$,  is linearly independent, as each element has a different terminal monomial. The Jacobian  map is therefore injective. The lemma is proved.
\end{proof}
 
Observe that every standard basis element $\overline v_a \cdot \overline v_b\cdot \overline v_c\in (S(\overline V_\ell))_3$ with $1\le a\le b\le c\le \ell-1$ is the terminal monomial of some element  $\overline w$ in the image of the Jacobian map. Let ${\bf \overline v}'=\prod_{j=0}^\ell \overline v_j^{k_j}\in (S(\overline V_\ell))_{n-3}$; i.e., $\sum_{j=0}^\ell k_j=n-3$. Then $\overline w\cdot {\bf \overline v}'\in J_n$.
We obtain that  a standard basis element  ${\bf \overline v}=\prod_{j=0}^\ell\overline  v_j^{k_j}\in (S(\overline V_\ell))_n$ (with $\sum_{j=0}^\ell k_j=n$) is the terminal monomial of some element in the   Jacobian ideal  if $k_0+k_\ell\le n-3$. 
   
Denote by $T_{n,\ell}$ the complementary set of standard basis vectors, namely
$$ T_{n,\ell}=\{ {\bf \overline v}=\prod_{j=0}^\ell \overline v_j^{k_j}\in (S(V_\ell))_n|\sum_{j=0}^\ell k_j=n\ \text{and}\ (k_0+k_\ell)\ge n-2\} \ .$$
Observe that $dim_\CC((S(\overline V_\ell)/J)_n)\le |T_{n,\ell}|$, where $|\mathfrak{S}|$ denotes the cardinality of a set $\mathfrak{S}$.
Theorem \ref{th:main} (a) now follows from Lemma \ref{le:dimensions lower} (b) and the following claim. 
\begin{claim}
(a) The cardinality of $T_{2,\ell}$ is $|T_{2,\ell}|=\binom{\ell+2}{2}$.\\
(b) The difference of cardinalities of $T_{n,\ell}$ and $T_{n-1,\ell}$ is $|T_{n,\ell}| -|T_{n-1,\ell}|=\binom{\ell+1}{2}$ for $n\ge 2$. \\
(c) The cardinality of $T_{n,\ell}$ is $|T_{n,\ell}|=\binom{\ell+2}{2}+(n-2)\binom{\ell+1}{2}$.
\end{claim}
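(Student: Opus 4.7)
The plan is a direct enumeration of $T_{n,\ell}$ by stratifying the exponent tuples $(k_0, k_1, \ldots, k_\ell)$ according to the ``middle mass'' $m := k_1 + k_2 + \cdots + k_{\ell-1}$. The defining condition $k_0 + k_\ell \ge n-2$ is equivalent to $m \le 2$, so only the three strata $m = 0, 1, 2$ contribute. Within the stratum of a fixed $m$, the tuple decomposes as two independent choices: a degree-$m$ multiset of the ``middle'' variables $\overline v_1, \ldots, \overline v_{\ell-1}$, of which there are $\binom{m+\ell-2}{\ell-2}$, and an ordered pair $(k_0, k_\ell)$ of non-negative integers with $k_0 + k_\ell = n-m$, of which there are $n-m+1$. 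Summing these products over $m \in \{0, 1, 2\}$ yields the closed formula
\[
|T_{n,\ell}| = (n+1) + n(\ell-1) + (n-1)\binom{\ell}{2},
\]
valid for all $n \ge 1$ (noting that the $m = 2$ stratum contributes zero when $n = 1$).

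Part (a) is then obtained by substituting $n = 2$ and simplifying $3 + 2(\ell - 1) + \binom{\ell}{2}$ to $\binom{\ell+2}{2}$ via a routine Pascal-type manipulation. Part (b) falls immediately out of the closed formula: the three stratum-wise differences $|T_{n,\ell}| - |T_{n-1,\ell}|$ are $1$, $\ell - 1$, and $\binom{\ell}{2}$ respectively, summing to $\ell + \binom{\ell}{2} = \binom{\ell+1}{2}$. Part (c) follows from (a) and (b) by telescoping,
\[
|T_{n,\ell}| = |T_{2,\ell}| + \sum_{k=3}^{n}\bigl(|T_{k,\ell}| - |T_{k-1,\ell}|\bigr) = \binom{\ell+2}{2} + (n-2)\binom{\ell+1}{2}.
\]

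There is no genuine obstacle: the claim is purely combinatorial. The only small caveat is the boundary case $n = 2$ in the $m = 2$ stratum, where $k_0 + k_\ell = 0$ forces $k_0 = k_\ell = 0$ and thus contributes exactly one pair, in agreement with the formula $n - m + 1 = 1$. Once this is verified, the entire claim reduces to assembling the three stratum counts and applying standard binomial identities.
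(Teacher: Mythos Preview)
Your proof is correct. It differs from the paper's argument in organization: the paper proves part~(b) by a bijective decomposition of $T_{n,\ell}$ according to whether $k_0 = 0$ or $k_0 \ge 1$. Elements with $k_0 \ge 1$ are in bijection with $T_{n-1,\ell}$ via division by $\overline v_0$, while elements with $k_0 = 0$ (hence $k_\ell \ge n-2$) are in bijection with degree-two monomials in $\overline v_1,\ldots,\overline v_\ell$ via division by $\overline v_\ell^{\,n-2}$, giving the increment $\binom{\ell+1}{2}$ directly. Your approach instead stratifies by the total middle mass $m = k_1+\cdots+k_{\ell-1}\in\{0,1,2\}$ and obtains a closed formula for $|T_{n,\ell}|$ in one stroke, from which all three parts drop out. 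Both arguments are equally elementary; yours has the minor advantage of producing the explicit polynomial $(n+1)+n(\ell-1)+(n-1)\binom{\ell}{2}$ in $n$ up front, while the paper's recursive decomposition perhaps makes the telescoping structure in~(c) more visibly natural.
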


\begin{proof}
Part (a) is obvious, as  $T_{2,\ell}$ is a basis of $(S(\overline V_\ell))_2$. Now, let us prove part (b). The case $n=2$ is obvious, as well. Suppose $n\ge 3$. Denote by $U_{2,\ell}$ the set 
$U_{2,\ell}=\{{\bf \overline v}'=\overline v_a\cdot \overline v_b\in T_{2,\ell}| a,b\ne 0\}$, by  $U_{n,\ell}$ the set $U_{n,\ell}=\{{\bf \overline v}={\bf \overline v}'\cdot \overline v_\ell^{n-2}|{\bf\overline  v}'\in U_{2,\ell}\}$, and by $T_{n,\ell}^0$ the set $T_{n,\ell}^0=\{{\bf \overline v}={\bf \overline v}'\cdot\overline v_0|{\bf \overline v}'\in T_{n-1,\ell}\}$. 
Observe that $T_{n,\ell}=T_{n,\ell}^0\sqcup U_{n,\ell}$. It is easy to see that  the cardinalities satisfy the relation
$|U_{n,\ell}|=|U_{2,\ell}|$ and $|U_{2,\ell}|=|T_{2,\ell}|-(\ell+1)=\binom{\ell+1}{2}$. Part (b) follows, and
part (c) is an immediate corollary from (a) and (b).
\end{proof}
 
  Theorem \ref{th:main}(a)  is proved.

  \subsection{Proof of  Theorem \ref{th:main} (b)}

It remains to prove part (b). First, recall  the definition and properties of the classical and quantum Veronese algebras from Appendix \ref{ap: Veronese}, in particular the natural $U(sl_2)$, resp.~$U_q(sl_2)$-module algebra structure. Notice that  if $\ell$ is even, then we can embed $V_{\ell}$ into $S_\sigma (V_2)_{\frac{\ell}{2}}$ and we obtain a homomorphism from  $S_\sigma V_\ell$ to $V_q(2,\frac{\ell}{2})$ (see Lemma \ref{le:V gen subal of Ve}).  Its image is the subalgebra $A(2,\frac{\ell}{2})$, introduced in  Appendix \ref{ap: Veronese}. Proposition \ref{pr:symmetric cubes}, Lemma \ref{le:V generates sub Vero}(a)  and Lemma  \ref{le:Veronese splitting} now imply that $A(2,\frac{\ell}{2})_2\cong (S_\sigma(V_\ell))_2$ and   $A(2,\frac{\ell}{2})_3\cong (S_\sigma(V_\ell))_3$.  The kernel is an ideal generated by homogeneous elements in degrees $2$ and $3$, by Lemma \ref{le:V generates sub Vero}(b).  We obtain therefore that $S_\sigma(V_\ell)\cong  A(2,\frac{\ell}{2})$ as $U_q(sl_2)$-module algebras. 
Now,  Theorem \ref{th:main}  (b) is a direct consequence of Lemma  \ref{le:Veronese splitting} and 
 Lemma \ref{le:V generates sub Vero}(a). Theorem \ref{th:main} is proved. 
\end{proof}

\begin{remark}
Numerical experiments and the philosophy of \cite{ZW} lead us to conjecture that if $\gg=sl_n$, a similar description of the braided symmetric algebras of $V_{2k\omega_1}$ in terms of Veronese algebras of $V_{2\omega_1}$ can be obtained. Here $\omega_1$ denotes the first fundamental weight of $\gg=sl_n$. This is particularly interesting, as $V_{2\omega_1}=S^2(V_{\omega_1})$ is  a flat $U_q(sl_n)$-module (see \cite[Theorem 1.1]{ZW})\end{remark}
 \subsection{The Radical}
In this section we will consider the set of nilpotent elements in the algebras $S_\sigma (V_\ell)$. First note the following facts.
\begin{proposition}
\label{pr:} 
 \noindent(a) Let $\ell$ be even. Then $0$ is the only nilpotent element in $S_\sigma(V_\ell)$.
 
 \noindent(b) Let $\ell$ be odd. The set of nilpotent elements $\mathcal{N}$ is a  graded $U_q(sl_2)$-module ideal of $S_\sigma (V_\ell)$. More precisely, 
 
 $$\mathcal{N}\cong \bigoplus_{n=2}^{\infty} \mathcal{N}_n \ ,{\rm where}  \quad \mathcal{N}_n \cong   \bigoplus_{i=1}^{\frac{\ell-1}{2}}  V_{n\ell-4i}\ .$$ 

 \end{proposition}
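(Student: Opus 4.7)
My plan is to treat (a) and (b) by parallel methods, realizing $S_\sigma(V_\ell)$ via a quantum polynomial algebra in each case. For (a), the proof of Theorem \ref{th:main}(b) already identifies $S_\sigma(V_\ell)\cong A(2,\ell/2)$ as a subalgebra of $V_q(2,\ell/2)\subset S_\sigma(V_2)\cong\CC_q[x_0,x_1,x_2]$. The latter is an iterated Ore extension over $\CC(q)$, hence a noetherian domain, so every subalgebra is a domain; in particular $S_\sigma(V_\ell)$ has no nonzero nilpotents.

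For (b) I will construct the odd-$\ell$ analogue of this Veronese map, now targeting the quantum plane. As $U_q(sl_2)$-modules $V_\ell\cong S^\ell_\sigma V_1$, and $S_\sigma(V_1)\cong\CC_q[x_0,x_1]$ is a domain. The inclusion $V_\ell\hookrightarrow S^\ell_\sigma V_1$ extends by the universal property of the tensor algebra to a morphism of graded $U_q(sl_2)$-module algebras $T(V_\ell)\to S_\sigma(V_1)$. Its restriction to $V_\ell\otimes V_\ell$ lands inside the irreducible summand $V_{2\ell}\subset S^{2\ell}_\sigma V_1$; since $\ell$ is odd, the Clebsch--Gordan decomposition recalled at the start of Section \ref{se:the class theorem} places $V_{2\ell}$ in $S^2_\sigma V_\ell$ and not in $\Lambda^2_\sigma V_\ell$. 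Hence the map annihilates $\Lambda^2_\sigma V_\ell$ and descends to a surjective $U_q(sl_2)$-module algebra homomorphism
$$\phi\colon S_\sigma(V_\ell)\twoheadrightarrow A:=\bigoplus_{n\ge 0} S^{n\ell}_\sigma V_1\;\subset\;S_\sigma(V_1),$$
the $\ell$-th Veronese subalgebra of the quantum plane. By Schur's lemma $\phi$ restricts to an isomorphism on the top summand $V_{n\ell}\subset S^n_\sigma V_\ell$ in each degree (the generator $v_0^n$ maps to $x_0^{n\ell}\ne 0$) and kills every lower summand $V_{n\ell-4i}$ with $i\ge 1$; thus $(\ker\phi)_n=\bigoplus_{i=1}^{(\ell-1)/2} V_{n\ell-4i}$ for $n\ge 2$, matching the claimed decomposition of $\mathcal{N}_n$.

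To finish I will identify $\ker\phi$ with $\mathcal{N}$. Since $A$ is a subalgebra of the domain $S_\sigma(V_1)$, it is itself a domain, so $\ker\phi$ is prime and contains every nilpotent element, giving $\mathcal{N}\subseteq\ker\phi$. For the reverse inclusion I will introduce the $U_q(sl_2)$-stable filtration $F^p(S^n_\sigma V_\ell):=\bigoplus_{i=p}^{(\ell-1)/2} V_{n\ell-4i}$, so that $F^1=\ker\phi$, and establish the multiplicative property $F^p\cdot F^q\subseteq F^{p+q}$. Indeed, by Clebsch--Gordan the highest weight of any summand of $V_{n\ell-4i}\otimes V_{m\ell-4j}$ is at most $(n+m)\ell-4(i+j)$; the equivariance of multiplication and Schur's lemma then force its image in $S^{n+m}_\sigma V_\ell$ to lie in the summands $V_{(n+m)\ell-4l}$ with $l\ge i+j$. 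Since $F^p=0$ once $p>(\ell-1)/2$, we obtain $(F^1)^{(\ell+1)/2}=0$; the ideal $\ker\phi$ is therefore nilpotent, so $\ker\phi\subseteq\mathcal{N}$, proving (b).

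The hard part will be verifying the multiplicative property of the filtration; once established, everything else is formal. The argument is genuinely odd-specific, since for $\ell$ even the number of summands of $S^n_\sigma V_\ell$ grows linearly in $n$ and no uniform bound $F^p=0$ is available, which is consistent with the absence of nilpotents asserted in (a).
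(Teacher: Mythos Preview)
Your argument is correct and follows essentially the same route as the paper: for (a) both embed $S_\sigma(V_\ell)$ in the domain $\CC_q[x_0,x_1,x_2]$, and for (b) both identify $\mathcal{N}$ as the kernel of the Veronese map to $V_q(1,\ell)\subset\CC_q[x_0,x_1]$ and use a Clebsch--Gordan weight bound to establish nilpotency. Your filtration $F^p\cdot F^q\subseteq F^{p+q}$ is a cleaner packaging of the same weight argument---it yields the stronger conclusion $\mathcal{N}^{(\ell+1)/2}=0$ (nilpotency of the ideal, not just of its homogeneous elements) and simultaneously handles the fact that $\mathcal{N}$ is an ideal, which the paper asserts without justification; the paper's detour through the Jacobson radical is also unnecessary once one observes, as you do, that the quotient is a domain.
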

\begin{proof}
Part (a) follows from the description of $S_\sigma (V_\ell)$ as a subalgebra of the quantum Veronese algebra $\mathcal{V}(2,\frac{\ell}{2})$ which   has no zero divisors. In order to prove part(b), we first show that  the submodules $\mathcal{N}$ is contained in the radical $Rad(S_\sigma (V_\ell))$.  Notice that $\mathcal{N}$ is an ideal, hence it suffices to prove that all elements of $\mathcal{N}$  are nilpotent. Consider  ${\bf v}\in  \mathcal{N}_n$. If ${\bf v}^\ell \ne 0$, then ${\bf v}^\ell\in S_\sigma (V_\ell)_{n\ell}$ lies in a submodule   which contains no highest weight vector of weight greater than $(\ell n\ell-4\ell)$. But Theorem \ref{th:main} implies that there exists no such highest weight vector in $S_\sigma (V_\ell)_{n\ell}$, hence ${\bf v}^\ell = 0$.  

Observe that the quotient $S_\sigma(V_\ell)/ \mathcal{N}$ is isomorphic to the Veronese algebra $V_q(1,\ell)$ defined in Definition \ref{def:Veronese}(b).  Since $\mathcal{N}\subset Rad(S_\sigma (V_\ell))$ we obtain that $Rad(S_\sigma (V_\ell))/\mathcal{N}\subset Rad (V_q(1,\ell))$. But $Rad (V_q(1,\ell))=\{0\}$ by Lemma \ref{le:radical veronese}. Hence $Rad(S_\sigma (V_\ell))=\mathcal{N}$.
   Part(b) is proved. The proposition is proved.\end{proof}

\section{Applications}
\label{se:Applications}

\subsection{The Poisson Closure}
\label{se:Poisson closure}
In this section, we will connect the results on braided symmetric and exterior powers to the semiclassical limits, and the notion of the Poisson closure, as developed in \cite{BZ}. Recall from Theorem \ref{th: Poisson-closure} (b) that the semiclassical limit  $\overline{S_\sigma(V)}$ of $S_\sigma(V)$ admits a Poisson bracket defined as $\{u,v\}=r^-(u\wedge v)$. Recall the  {\it  Jacobian map} from \cite{BZ} and \cite{ZW}:
$$ J:  S(\overline V)\cdot S(\overline V)\cdot S(\overline V)  \to S(\overline V) ,$$
$$ u\otimes v\otimes  w\mapsto \{u,\{v,w\}\}+ \{w,\{u,v\}\}+ \{v,\{w,u\}\}\ 
 .$$
   Indeed, we showed that the image of $J$ is a graded  $U(\gg)$-module ideal and that 
 $S(\overline V)/J  $ together with the bracket induced by $r^-$ 
 is a Poisson algebra, which we refer to as the {\it Poisson closure} of $(S(\overline V),\{\cdot,\cdot\})$ (Theorem \ref{th: Poisson-closure} (b) and \cite[Corollary 2.20]{BZ}).
 
 Using the construction of the classical limit developed in Section \ref{se:classical limit} we now derive the following corollary from Theorem \ref{th:main}.
 
 \begin{corollary}
 Let $V$ be a simple $U_q(sl_2)$-module. The classical limit of the braided symmetric algebra $\overline{S_\sigma (V)}$ is isomorphic, as an module algebra, to the Poisson closure of $S (\overline V)$.
 \end{corollary}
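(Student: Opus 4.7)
The plan is to upgrade the canonical surjection from the Poisson closure $S(\overline V_\ell)/J$ onto $\overline{S_\sigma(V_\ell)}$, constructed in \cite{BZ}, to an isomorphism. Its existence is essentially automatic: by Theorem \ref{th: Poisson-closure}(b), $\overline{S_\sigma(V_\ell)}$ itself carries the Poisson bracket $\{u,v\}=r^{-}(u\wedge v)$, so the Jacobi identity holds there and the image of the Jacobian map must vanish, forcing $J$ into the kernel of the canonical projection $S(\overline V_\ell)\twoheadrightarrow \overline{S_\sigma(V_\ell)}$. This yields a surjective graded $U(sl_2)$-module Poisson algebra homomorphism $\phi: S(\overline V_\ell)/J \twoheadrightarrow \overline{S_\sigma(V_\ell)}$, so the corollary reduces to matching graded dimensions in each degree.

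For odd $\ell$, the required equality is already recorded inside the proof of Theorem \ref{th:main}(a). The terminal-monomial argument showed that every standard monomial in $(S(\overline V_\ell))_n$ outside the set $T_{n,\ell}$ lies in $J$, giving the upper bound $\dim_{\CC}(S(\overline V_\ell)/J)_n\le |T_{n,\ell}|=\binom{\ell+2}{2}+(n-2)\binom{\ell+1}{2}$. Combining Theorem \ref{th:main}(a) with Theorem \ref{th: Poisson-closure}(a) shows this cardinality equals $\dim_{\CC}\overline{S_\sigma(V_\ell)}_n$, so surjectivity of $\phi$ forces both bounds to meet and $\phi$ is an isomorphism in odd degree.

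For even $\ell$ the same strategy applies, but the terminal-monomial step is more delicate, because the leading coefficient $(\ell-a)(\ell-2b)c$ of ${\bf c}(\overline v_a\wedge \overline v_b\wedge \overline v_c)$ vanishes precisely at $b=\ell/2$. The plan is to inspect the full six-term expansion of ${\bf c}(\overline v_a\wedge\overline v_b\wedge\overline v_c)$ appearing in the odd case and, for the degenerate triples with $b=\ell/2$, select a different nonzero summand whose monomial is still strictly lexicographically below $\overline v_a\overline v_b\overline v_c$. This produces an analogous combinatorial set $T_{n,\ell}^{\mathrm{ev}}$ giving an upper bound on $\dim_{\CC}(S(\overline V_\ell)/J)_n$, and the count must then be matched against $\dim_{\CC}\overline{S_\sigma(V_\ell)}_n=\sum_{i=0}^{n\ell/4}(n\ell-4i+1)$ provided by Theorem \ref{th:main}(b).

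The main obstacle is precisely this even-case combinatorics: organising the replacement terminal monomials at $b=\ell/2$ uniformly across all triples $(a,b,c)$ and verifying that the resulting cardinality $|T_{n,\ell}^{\mathrm{ev}}|$ matches the Theorem \ref{th:main}(b) dimension formula exactly. Once the upper and lower dimension bounds meet in each degree, $\phi$ is forced to be an isomorphism of graded $U(sl_2)$-module algebras, proving the corollary for both parities of $\ell$.
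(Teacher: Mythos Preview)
Your treatment of odd $\ell$ is correct and coincides with the paper's implicit argument: the proof of Theorem~\ref{th:main}(a) already squeezes $\dim_\CC(S(\overline V_\ell)/J)_n$ between $|T_{n,\ell}|$ from above and $\dim_\CC\overline{S_\sigma(V_\ell)}_n$ from below, and since these agree the surjection $\phi$ is an isomorphism degree by degree.

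For even $\ell$, however, your plan has a real gap and also diverges from the paper. You propose to redo the terminal-monomial method, replacing the vanishing leading term at $b=\ell/2$ by another summand of the six-term Jacobian expansion, but you neither carry out this replacement, nor verify that the resulting terminal monomials are pairwise distinct, nor compute the promised cardinality $|T_{n,\ell}^{\mathrm{ev}}|$ and match it against $\sum_{i=0}^{\lfloor n\ell/4\rfloor}(n\ell-4i+1)$. You correctly flag this as the ``main obstacle,'' and it is substantive: the replacement monomials no longer come from a single uniform shift $(a,b,c)\mapsto(a+1,b,c-1)$, so both injectivity of the Jacobian map and the counting require genuinely new work that you have not supplied.

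The paper avoids this entirely. Its proof of Theorem~\ref{th:main}(b) identifies $\overline{S_\sigma(V_\ell)}$ with the subalgebra $A(2,\ell/2)$ of the classical Veronese algebra, and Lemma~\ref{le:V generates sub Vero}(b) shows that the kernel of $S(\overline V_\ell)\twoheadrightarrow A(2,\ell/2)$ is generated in degree at most three. Since $J$ is likewise generated in degree three and $J\subset\ker$, the whole question collapses to the single degree-three equality $J_3=(\ker)_3$, which is exactly the computation of the generators of the obstruction ideal already carried out in \cite{BZ} (this is the input behind Proposition~\ref{pr:symmetric cubes}; the introduction notes explicitly that these generators were determined there). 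No higher-degree terminal-monomial analysis is needed for even $\ell$.
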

 
Employing the notion of bracketed superalgebras (see \cite[Section 2.2]{BZ}), an analogous assertion for the braided exterior algebras of  simple $U_q(sl_2)$-module was proved in \cite{BZ}. Thus, our result gives further evidence for the following conjecture.
 
\begin{conjecture}
Let $V$ be an  object in $\OO_f$. The classical limit of the braided symmetric, resp.~exterior algebra $\overline{S_\sigma (V)}$, resp.~$\overline{\Lambda_\sigma (V)}$  is isomorphic, as an algebra, to the Poisson closure, of $S(\overline V)$, resp.~$\Lambda (\overline V)$.
\end{conjecture}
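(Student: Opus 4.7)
The plan is to establish the conjecture for the symmetric case by constructing a canonical surjection $S(\overline V)/J \twoheadrightarrow \overline{S_\sigma(V)}$ and then proving degree by degree that it is an isomorphism; the exterior case proceeds identically using the bracketed superalgebra formalism of \cite{BZ}. The surjection itself is immediate from the results recalled in the excerpt: by Theorem \ref{th: Poisson-closure}(a), $\overline{S_\sigma(V)}$ is a quotient of $S(\overline V)$; by Theorem \ref{th: Poisson-closure}(b) it carries the Poisson bracket $\{u,v\}=r^-(u\wedge v)$; and since the Jacobian of any Poisson bracket vanishes identically, the projection $S(\overline V)\to \overline{S_\sigma(V)}$ kills $J$, yielding surjections $\phi_n:(S(\overline V)/J)_n\twoheadrightarrow \overline{S_\sigma(V)}_n$ in each degree $n$.

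To establish injectivity, I would argue by induction on $n$: the cases $n\le 2$ are trivial (both sides equal $S^n(\overline V)$) and $n=3$ is the very definition of $J$ as the image of the Jacobian map. For the inductive step, the key observation is that both algebras are $U(\gg)$-module quotients of $S(\overline V)$, so $\ker(\phi_n)$ is a graded $U(\gg)$-submodule, and it suffices to show that no highest weight vector of $(S(\overline V)/J)_n$ is killed. The natural approach is to lift any candidate highest weight vector $\overline w\in (S(\overline V)/J)_n$ to a highest weight vector $w\in S_\sigma(V)_n$ over $\CC(q)$, using Proposition \ref{pr:n-th symm-power} and the quantum Howe-duality constructions of \cite{BZ}, and then to verify via specialization that $w\notin \langle \Lambda^2_\sigma V\rangle_n$. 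This is essentially the mechanism of the proof of Theorem \ref{th:main}(a), where the combinatorial set $T_{n,\ell}$ of terminal monomials complementary to the Jacobian ideal provided an explicit upper bound on $\dim(S(\overline V_\ell)/J)_n$ which matched the lower bound on $\dim \overline{S_\sigma(V_\ell)}_n$ coming from Proposition \ref{pr:n-th symm-power}.

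The principal obstacle is the absence of an explicit combinatorial description of either side for general $V$ and general $\gg$: beyond degree three, the generators of $J$ as a $U(\gg)$-module are not known, nor is the isotypic decomposition of $\overline{S_\sigma(V)}_n$. A sensible strategy is therefore to proceed case-by-case along the classification of flat modules in \cite{ZW}, starting with the trivial flat case in which both sides coincide with $S(\overline V)$, then the simple modules admitting a Howe-dual description such as the parabolic modules with abelian nilradical, and finally the genuinely non-flat cases. In the last regime the conjecture amounts to the statement that all cohomological obstructions to extending a first-order Poisson deformation of $S(\overline V)$ to a full associative $q$-deformation are concentrated in degree three, and proving this in full generality will likely require a Hochschild-cohomological computation bounding the relevant obstruction classes by the image of the Jacobian map.
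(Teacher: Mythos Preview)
The statement you are attempting to prove is a \emph{conjecture} in the paper, not a theorem: the paper provides no proof of it in general. What the paper does establish is the special case where $V$ is a simple $U_q(sl_2)$-module, stated as a Corollary immediately preceding the conjecture and derived directly from Theorem~\ref{th:main}. The conjecture for arbitrary $V\in\OO_f$ over arbitrary semisimple $\gg$ is left open, and the paper offers it only as motivated by the $sl_2$ evidence together with the exterior-algebra results of \cite{BZ}.

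Your proposal is accordingly not a proof but a proof \emph{strategy}, and you are candid about this: your final paragraph explicitly names the missing ingredients (no combinatorial description of either side beyond degree three, no control over higher obstruction classes) and proposes a case-by-case attack along the flat-module classification. That is a reasonable research programme, but it is not a proof, and there is nothing in the paper to compare it against.

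One specific gap worth flagging: your claim that the case $n=3$ ``is the very definition of $J$ as the image of the Jacobian map'' is not correct. The definition of $J$ tells you that $J_3$ is the image of the Jacobian map $\Lambda^3\overline V\to S^3(\overline V)$, but the degree-$3$ instance of the conjecture asserts that $J_3$ equals the kernel of $S^3(\overline V)\to\overline{S_\sigma(V)}_3$. The inclusion $J_3\subset\ker$ follows from the Poisson property (as you correctly argue for the surjection $\phi_n$), but the reverse inclusion is already a non-trivial statement about the braided symmetric cube. In the paper's $sl_2$ setting this is precisely the content of Proposition~\ref{pr:symmetric cubes} combined with the explicit terminal-monomial computation for the Jacobian map; for general $\gg$ and general $V$ it is not established anywhere in the paper and would itself require proof.
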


\subsection{Non-Commutative Geometry}
\label{se:non-comm geom}
In this section we will associate the braided symmetric algebras with non-commutative geometry in the sense of Artin, Tate and Van den Bergh \cite{ATV} and Artin-Zhang \cite{AZ 94}. First, we will recall some of their definitions and results. Let $V$ be a $n$-dimensional vectorspace. Every element of $f\in(V^*)^{\otimes k}$ defines a multilinear form  on $V^{\otimes k}$. It induces a form on $(\PP V)^{\otimes k}$, where $\PP V$ is the projective space of lines in $V$.  Since the form is multi-homogeneous,
it defines a zero locus in $(\PP V)^{\otimes k}$. Let $\II$ be a graded ideal in $T(V^*)$. Then the $d$-th graded component $\II_d$ of $\II$ defines a scheme $\Gamma_d$, where $\Gamma_d$ is the scheme  of zeros of  $\II_d\subset (\PP V)^{\otimes d}$. We have the following fact.

\begin{proposition}\cite[Proposition 3.5 ii]{ATV}
Let $\PP V_i$ denote the $i$-th factor of $(\PP V)^{\otimes d}$ and for $1\le i\le j\le d$ denote by $pr_{ij}$ the projection of  $(\PP V)^{\otimes d}$ onto $\PP V_i\times\ldots\times \PP V_j$. Then  $pr_{ij}(\Gamma_d)$ is a closed subscheme of $\Gamma_{j-i+1}$. 
\end{proposition}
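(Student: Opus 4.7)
The plan is to first prove the containment at the ideal level using that $\II$ is \emph{two-sided}, and then invoke properness of projective projections to get that $pr_{ij}(\Gamma_d)$ is actually a closed subscheme of $\Gamma_{j-i+1}$ rather than just a constructible subset.

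First I would exploit the two-sided ideal structure of $\II\subset T(V^*)$. For any multilinear form $f\in \II_{j-i+1}\subset (V^*)^{\otimes(j-i+1)}$ and any choice of $h_1\in (V^*)^{\otimes(i-1)}$ and $h_2\in (V^*)^{\otimes(d-j)}$, the tensor $h_1\otimes f\otimes h_2$ lies in $\II_d$ and therefore vanishes on $\Gamma_d$ by definition. Evaluating this at a point $(p_1,\ldots,p_d)\in \Gamma_d$ gives
$$h_1(p_1,\ldots,p_{i-1})\cdot f(p_i,\ldots,p_j)\cdot h_2(p_{j+1},\ldots,p_d)=0.$$
Since for any point of $\PP V$ there is a linear form in $V^*$ not vanishing at it, I can choose $h_1$ and $h_2$ as products of such forms so that their two evaluations above are nonzero. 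This forces $f(p_i,\ldots,p_j)=0$, and as $f\in \II_{j-i+1}$ was arbitrary we obtain $(p_i,\ldots,p_j)\in \Gamma_{j-i+1}$. Thus set-theoretically $pr_{ij}(\Gamma_d)\subset \Gamma_{j-i+1}$.

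Second, I would upgrade this to a statement about closed subschemes. Since $(\PP V)^{\otimes d}$ is projective and the target is separated, the projection $pr_{ij}$ is a closed (in fact proper) morphism. Consequently the scheme-theoretic image of $pr_{ij}|_{\Gamma_d}$ is a well-defined closed subscheme of $(\PP V)^{\otimes (j-i+1)}$, and the set-theoretic inclusion above, together with the fact that $\Gamma_{j-i+1}$ is itself cut out by the multilinear forms in $\II_{j-i+1}$, shows that the defining ideal sheaf of $\Gamma_{j-i+1}$ pulls back to functions vanishing on $\Gamma_d$; hence the scheme-theoretic image factors through $\Gamma_{j-i+1}$.

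The main potential pitfall is keeping the scheme-theoretic bookkeeping precise: in principle the set-theoretic image of a morphism need only be constructible, so the two-sided ideal argument is exactly what is needed to guarantee that no reduced or nonreduced structure is lost when one replaces the naive image by the scheme-theoretic image. Once this is in place the argument is essentially formal, since the multiplication by arbitrary $h_1,h_2$ precisely records the fact that projecting onto consecutive factors of $(\PP V)^{\otimes d}$ corresponds to ``padding'' the corresponding multilinear forms on both sides, which is exactly the operation respected by a two-sided ideal.
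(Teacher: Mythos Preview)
The paper does not supply its own proof of this proposition: it is quoted verbatim from \cite[Proposition 3.5 ii]{ATV} and used as a black box in Section \ref{se:non-comm geom}. So there is nothing in the paper to compare your argument against.

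That said, your argument is correct and is essentially the standard one. The key point is exactly what you identify: because $\II$ is a two-sided ideal, $(V^*)^{\otimes(i-1)}\otimes \II_{j-i+1}\otimes (V^*)^{\otimes(d-j)}\subset \II_d$, and since the forms $h_1\otimes h_2$ with $h_1\in (V^*)^{\otimes(i-1)}$, $h_2\in (V^*)^{\otimes(d-j)}$ have no common zero on the complementary factors, one can cancel them to conclude that $pr_{ij}^*f$ vanishes on $\Gamma_d$ for every $f\in\II_{j-i+1}$. This gives the scheme-theoretic factorization $\Gamma_d\to\Gamma_{j-i+1}$, not merely the set-theoretic one; properness of the projection then makes the image closed. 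Your discussion of the scheme-theoretic bookkeeping is a bit informal but the idea is right: the vanishing of $h_1\otimes f\otimes h_2$ for \emph{all} $h_1,h_2$ is precisely what forces $pr_{ij}^*f$ into the ideal sheaf of $\Gamma_d$, not just into its nilradical.
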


Notice that by  applying projections $pr_{1(d-1)}$ to $(\PP V)^{\otimes d}$, therefore mapping $\Gamma_d$ to $\Gamma_{d-1}$, we can now consider the inverse limit $\Gamma$ of the sets $\Gamma_d$.
We need it in order to classify the  point modules which we introduce in the following.
 Denote by $A$  the algebra  $A=T(V^*)/\II$.  
\begin{definition}\cite[Definition 3.8]{ATV}
 A graded right $A$-module $M$ is called a point module if it satisfies the following conditions:
\begin{itemize}
\item $M$ is generated in degree zero,
\item $M_0=k$, and
\item ${\rm dim} M_i=1$ for all $i\ge 0$.
\end{itemize}
\end{definition}

Artin, Tate and Van Den Bergh classified the point modules in the following terms.

\begin{proposition}\cite[Corollary 3.13]{ATV}
\label{pr:class of point modules}
 The point modules of $A$ are in one-to-one correspondence with the points of $\Gamma$. 
\end{proposition}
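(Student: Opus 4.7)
The plan is to construct the bijection explicitly. A point module $M$ will produce a sequence of points $(p_1, p_2, \dots)$ in $\PP V$ by reading off the one-dimensional multiplication maps $M_{i-1} \otimes V^* \to M_i$, and, conversely, each coherent sequence in $\Gamma$ will produce a point module by an obvious construction; the two assignments will be mutually inverse up to isomorphism.

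For the forward direction, let $M$ be a point module and fix bases $m_i \in M_i$. The multiplication map $V^* \to \mathrm{Hom}(M_{i-1}, M_i) \cong k$ sending $x$ to the scalar by which $x$ acts on $m_{i-1}$ is a nonzero linear functional, hence represented by some $\phi_i \in V$, well-defined up to scalar; set $p_i = [\phi_i] \in \PP V$. A routine inductive computation gives
\[ m_0 \cdot (x_1 \otimes x_2 \otimes \cdots \otimes x_d) = \phi_1(x_1)\, \phi_2(x_2) \cdots \phi_d(x_d)\cdot m_d , \]
so that for general $f \in (V^*)^{\otimes d}$ one has $m_0 \cdot f = f(\phi_1, \dots, \phi_d)\cdot m_d$, with $f$ viewed as a multilinear form on $V^{\otimes d}$. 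Since $\II_d$ annihilates $m_0$, every $f \in \II_d$ vanishes at $(\phi_1, \dots, \phi_d)$; because $f$ is multi-homogeneous this is equivalent to vanishing at the class $(p_1,\dots,p_d)$, so $(p_1, \dots, p_d) \in \Gamma_d$. Compatibility under $pr_{1,d-1}$ is automatic from truncation, so the sequence defines a point of the inverse limit $\Gamma$.

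Conversely, given $(p_1, p_2, \dots) \in \Gamma$, choose lifts $\phi_i \in V$ and define $M = \bigoplus_{i \ge 0} k\cdot m_i$ with $V^*$-action $m_{i-1} \cdot x := \phi_i(x) m_i$, extended multiplicatively to an action of $T(V^*)$. The same calculation shows $\II_d \cdot m_0 = 0$; to conclude that $\II$ annihilates all of $M$, one must check $\II_d \cdot m_j = 0$ for every $j \ge 0$, which amounts to showing that each shifted tuple $(p_{j+1}, \dots, p_{j+d})$ again lies in $\Gamma_d$. This is exactly where the projection property from \cite[Proposition 3.5 ii]{ATV} enters: applying $pr_{j+1,j+d}$ to $(p_1,\dots,p_{j+d}) \in \Gamma_{j+d}$ lands in a closed subscheme of $\Gamma_d$, giving the required condition. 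Hence $M$ is a graded $A$-module, visibly a point module, and the two constructions are inverse up to the rescaling ambiguity in the choice of $m_i$ and $\phi_i$, which is precisely the freedom inside an isomorphism class of point modules. I expect the shift/projection step to be the main subtlety: without it, one could only verify the module relation at the generator and not propagate it up the grading, and this is where the proposition preceding the statement does the real work.
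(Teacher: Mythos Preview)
The paper does not prove this proposition at all: it is stated with the citation \cite[Corollary~3.13]{ATV} and used as a black box, so there is no ``paper's own proof'' to compare against. Your argument is the standard one behind the cited result and is essentially correct; the one point worth tightening is the surjectivity of the functional $V^* \to k$ coming from $M_{i-1}\to M_i$, which you use implicitly when you say it is ``nonzero'' --- this follows because $M$ is generated in degree zero, so $M_{i-1}\cdot V^* = M_i \ne 0$.
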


Using our explicit description of the braided symmetric algebras of simple $U_q(sl_2)$-modules we obtain the following result.

\begin{theorem}
\label{th: non-comm curve}
\noindent(a) Let $\ell$ be odd. Then the point modules of $S_\sigma (V_\ell)$ are parametrized by the points of the curve, defined by the Veronese algebra $V_q(1,\ell)$ in $\PP V_\ell$.

\noindent(b) Let $\ell$ be even. Then the point modules of $S_\sigma (V_\ell)$ are parametrized by the points of the surface, defined by the Veronese algebra $V_q(2,\frac{\ell}{2})$ in $\PP V_\ell$.  
\end{theorem}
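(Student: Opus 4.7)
The plan is to invoke Proposition \ref{pr:class of point modules} and compute the point scheme $\Gamma = \varprojlim_d \Gamma_d \subset \prod_i \PP V_\ell^*$. Since the defining relations of $S_\sigma(V_\ell)$ live in degree two, it suffices to analyse $\Gamma_2 \subset \PP V_\ell^* \times \PP V_\ell^*$---the common zero locus of $\Lambda^2_\sigma V_\ell$---and to track its image under the projections of \cite[Prop.~3.5]{ATV} that determine the higher $\Gamma_d$. Throughout I identify $\PP V_\ell^*$ with $\PP V_\ell$ using the self-duality of each simple $U_q(sl_2)$-module.

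For part (b), with $\ell$ even, the key input is the isomorphism $S_\sigma(V_\ell) \cong A(2, \ell/2) \subset V_q(2, \ell/2)$ established in the proof of Theorem \ref{th:main}(b). The embedding $V_\ell \hookrightarrow S_\sigma(V_2)_{\ell/2}$ realises $V_\ell$ inside the degree-$\ell/2$ piece of the quantum polynomial ring $S_\sigma(V_2) \cong \CC_q[x_0, x_1, x_2]$. For each $[v] \in \PP V_2$, the graded module $M_{[v]} = \bigoplus_{n \ge 0} \CC \cdot v^{n\ell/2}$ is a point module of $V_q(2, \ell/2)$ and restricts to a point module of the subalgebra $S_\sigma(V_\ell)$; the assignment $[v] \mapsto [v^{\ell/2}] \in \PP V_\ell$ sweeps out the Veronese surface. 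The converse---that no further point modules exist---should follow from showing that the explicit generators of $\Lambda^2_\sigma V_\ell = \bigoplus_i V_{2\ell - 2 - 4i}$ cut $\Gamma_2$ down exactly to this locus, which I expect to handle with the Howe-duality arguments of \cite{BZ} already deployed in Section \ref{se:the class theorem}.

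For part (a), with $\ell$ odd, I use Proposition \ref{pr:}: the nilpotent ideal $\mathcal{N}$ satisfies $S_\sigma(V_\ell)/\mathcal{N} \cong V_q(1, \ell)$. Point modules of $V_q(1, \ell)$, obtained by the same Veronese construction $[v] \mapsto [v^\ell]$ via $V_\ell \hookrightarrow S_\sigma(V_2)_\ell$, lie on the Veronese curve in $\PP V_\ell$ and lift immediately to point modules of $S_\sigma(V_\ell)$. The completeness direction requires showing that any point module factors through the quotient by $\mathcal{N}$. A nilpotent homogeneous $n \in \mathcal{N}_d$ acts on each one-dimensional $M_i$ as a scalar; using the agreement $S_\sigma(V_\ell)_k \cong V_q(1, \ell)_k$ on the top isotypic components for $k = 2, 3$ (from Proposition \ref{pr:symmetric cubes}) together with compatibility with left multiplication by elements of $V_\ell = S_\sigma(V_\ell)_1$, I would force these scalars to vanish.

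The main obstacle is the completeness step in both parts---ruling out point modules not supported on the Veronese variety. I expect the cleanest route is via the classical limit: by Theorem \ref{th: Poisson-closure} and Proposition \ref{pr:almost equivalent O bar O}, the point scheme of $\overline{S_\sigma(V_\ell)}$ coincides with that of the Poisson closure $S(\overline V_\ell)/J$, and the latter is controlled by the explicit Jacobian ideal computed in Section \ref{se:the class theorem}, whose zero locus in $\PP \overline V_\ell$ is the desired Veronese variety by elementary commutative projective geometry. Flatness of the deformation then transports the conclusion back to $S_\sigma(V_\ell)$.
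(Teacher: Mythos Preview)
Your overall strategy matches the paper's: for (b) invoke the isomorphism $S_\sigma(V_\ell)\cong A(2,\ell/2)\subset V_q(2,\ell/2)$ from the proof of Theorem~\ref{th:main}(b), and for (a) pass to the quotient $S_\sigma(V_\ell)/\mathcal N\cong V_q(1,\ell)$ from Proposition~\ref{pr:}. The paper's own argument is extremely terse---literally ``the assertion follows''---so your more explicit account of how point modules arise from the Veronese embedding is a genuine improvement in exposition, and your identification of the completeness direction as the nontrivial step is accurate.

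Two issues. First, a slip in part (a): for odd $\ell$ there is no embedding $V_\ell\hookrightarrow S_\sigma(V_2)_\ell$, since every simple summand of $S_\sigma(V_2)_\ell$ has even highest weight. The Veronese algebra $V_q(1,\ell)$ sits inside $S_\sigma(V_1)=\CC_q[x_0,x_1]$, and the embedding you want is $V_\ell\cong S_\sigma(V_1)_\ell$, giving the map $\PP^1\to\PP V_\ell$, $[v]\mapsto[v^\ell]$.

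Second, your proposed completeness argument via the classical limit is shakier than it looks. The deformation $S_\sigma(V_\ell)\rightsquigarrow\overline{S_\sigma(V_\ell)}$ is flat in the sense of equal Hilbert series, but point schemes are only upper-semicontinuous in flat families: the special fibre $\overline{S_\sigma(V_\ell)}$ can acquire \emph{extra} point modules, not lose them, so knowing the classical point scheme bounds the quantum one from below, not above. For part~(a) a more direct route is available: once you know $\mathcal N$ is nilpotent you can argue that every point module of $S_\sigma(V_\ell)$ is annihilated by $\mathcal N$ (a point module has no nonzero finite-dimensional submodules, while the image of $\mathcal N$ acting on it is forced to be such), hence factors through $V_q(1,\ell)$. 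For part~(b) the completeness is really built into the identification $S_\sigma(V_\ell)\cong A(2,\ell/2)$ together with Lemma~\ref{le:V generates sub Vero}(b), which pins down the relation ideal in degrees $\le 3$; the point scheme of $A(2,\ell/2)$ then agrees with that of $V_q(2,\ell/2)$ because the two algebras coincide from degree~$2$ onward.
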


\begin{proof}
Part (b)  follows directly from Proposition \ref{pr:class of point modules} and the proof of  Theorem  \ref{th:main} (b)  which also provides the embedding $\PP V_\ell\subset \PP^q$ where $q=\binom{2+\frac{\ell}{2}}{\frac{\ell}{2}} $,   as the generating set of $S_\sigma (V_\ell)$, as in the proof of Theorem  \ref{th:main} (b). 

In order to prove (a) we have to observe that the quotient of $S_\sigma (V_\ell)$ by the ideal $\mathcal{N}$ is isomorphic to the Veronese algebra $V_q(1,\ell)$. The assertion follows. 
The theorem is proved.
\end{proof}

\begin{remark}
The assertion  of Theorem \ref{th: non-comm curve}  was proved in the special case $\ell=3$ by Vancliff in \cite{Van}.
\end{remark}

It will be interesting to gain a more complete view of the non-commutative geometry related to braided symmetric algebras. As mentioned in the introduction, a corresponding version of $Proj$ was defined by Artin and Zhang in \cite{AZ 94}.  Let $A$ be a non-commutative algebra, $_AA$ the left-regular module, $gr(A)$ the category of finitely  graded $A$-modules (here grading means $\ZZ$-grading) and $s$ the auto-equivalence induced by the degree-shift functor. Then, $qgr(A)$ is the quotient of the category of $gr(A)$-modules by the finite-dimensional modules (the torsion modules) and $Proj(A)$ is given by $Proj(A)=(qgr(A),_A A,s)$.  We believe that this approach will be useful in describing the geometry attached to braided symmetric algebras, and provide important and interesting avenues for future research.
\begin{appendix}
\section{Veronese Algebras--Classical and Quantum}
\label{ap: Veronese}
In this section we shall review the definitions and properties of the quantum and classical Veronese algebras and varieties. For more details and proofs of the classical facts see e.g.~\cite{Har}. Recall the definition of the {\it skew polynomials} $\CC_q[x_0,\ldots ,x_n]$ which are defined as the free algebra generated by $x_0$,\ldots $x_n$ and subject to the relations
$$x_j x_i=q^{-1} x_i x_j$$
for all $0<i<j<n$. Note that it is a flat deformation of the polynomial algebra $\CC[x_0,\ldots ,x_n]$. If $x_I=x_{i_1}\ldots x_{i_d}$ and $x_J=x_{j_1}\ldots x_{j_\ell}$ with $i_1\le \ldots i_d$ and $j_1\le \ldots j_d$, then denote by $\Lambda(I,J)$ the number
$$  \Lambda(I,J)=\sum_{m=1}^\ell {\rm max} \{k: i_k<j_m\}\ .$$
It is easy to verify that  $x_I x_J=q^{\Lambda(I,J)} x_K$, where $K=x_{k_1}\ldots x_{k_{d+\ell}}$ with $k_1\le k_2\le \ldots \le k_{d+\ell}$.
\begin{definition}
\label{def:Veronese}
\noindent(a)The Veronese algebra $V(n,d)$ is the subalgebra of $\CC[x_0,\ldots,x_n]$ generated by the homogeneous elements of degree $d$.

\noindent(b)The quantum Veronese algebra $V_q(n,d)$ is the subalgebra of $\CC_q[x_0,\ldots,x_n]$ generated by the homogeneous elements of degree $d$.
\end{definition} 
The Veronese algebra induces a map from $\PP^n\to \PP^{\binom{n+d}{d}-1}$. Its image is called the  {\it Veronese variety} $\mathcal V (n,d)$. 
The relations defining the Veronese variety are given as follows. Notice that we can describe the  coordinate functions $x_I$ on $\CC^{\binom{n+d}{d}}$ by the $d$-element partitions  $I=(0\le i_1\le i_2\le \ldots \le i_d\le n)$, or rather $x_I=x_{i_1}\ldots x_{i_d}$. We have the following fact.
 
 \begin{lemma} \noindent(a) \cite[example 2.4]{Har}
The Veronese variety  $\mathcal V (n,d)$ is the zero locus of all relations
$x_Ix_J=x_Kx_L\in \CC[x_0,\ldots,x_{\binom{n+d}{d}-1}]$ such that $x_Ix_J=x_Kx_L\in \CC[x_0,\ldots,x_n]$. In particular, all  relations are quadratic.

\noindent(b) The quantum Veronese algebra is   the quotient of  $\CC_q[x_0,\ldots,x_{\binom{n+d}{d}-1}]$ by the ideal generated by the relations
$$x_I x_J=q^{\Lambda(I,J;K,L)}x_Kx_L \in \CC_q[x_0,\ldots,x_n]\ ,$$
where $\Lambda(I,J;K,L)=\Lambda(I,J)-\Lambda(K,L)$.
 \end{lemma}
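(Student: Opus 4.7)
The plan is to treat part (a) as essentially known (it is cited from Hartshorne) and focus the argument on part (b), deducing it from a direct computation in the skew polynomial algebra together with a Hilbert series/PBW argument.

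For part (a), the classical argument is to note that two ordered monomials $x_I x_J$ and $x_K x_L$ in the original variables $x_0,\dots,x_n$ coincide in the commutative polynomial ring precisely when $I+J=K+L$ as multisets of size $2d$. So the quadratic relations listed clearly vanish on the image of the Veronese embedding. Conversely, given any point $[y_I]_I \in \PP^{\binom{n+d}{d}-1}$ satisfying all these relations, one picks an index $I_0=(i_1,\dots,i_d)$ with $y_{I_0}\ne 0$, and the relations of the form $y_{I_0}\cdot y_{(i_1,\dots,i_{d-1},j)} = y_{(j,i_1,\dots,i_{d-1})}\cdot y_{I_0}$ (and permutations thereof) allow one to reconstruct a preimage $[a_0:\dots:a_n]\in \PP^n$, which is the standard reconstruction argument from \cite{Har}.

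For part (b), I would proceed as follows. First, a direct computation in $\CC_q[x_0,\dots,x_n]$: using the defining relation $x_j x_i = q^{-1} x_i x_j$ for $i<j$ and induction on the number of out-of-order inversions, one verifies $x_I x_J = q^{\Lambda(I,J)}\, x_M$ whenever $I,J$ are already individually sorted and $M$ is the sorted concatenation of $I$ and $J$. Consequently, if $I+J = K+L$ as multisets, then in $\CC_q[x_0,\dots,x_n]$ one obtains $x_I x_J = q^{\Lambda(I,J)-\Lambda(K,L)} x_K x_L$, so the quoted relations hold in $V_q(n,d)$. This produces a surjective homomorphism $\pi\colon \CC_q[x_0,\dots,x_{\binom{n+d}{d}-1}]/\II \twoheadrightarrow V_q(n,d)$, where $\II$ is the ideal generated by these relations.

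The main obstacle, and the real content of part (b), is proving that $\pi$ is injective, i.e.~that no further relations are needed. The approach I would take is a PBW/Hilbert series argument. The skew polynomial algebra $\CC_q[x_0,\dots,x_n]$ has a $\CC(q)$-basis of ordered monomials $x_0^{a_0}\cdots x_n^{a_n}$, so $V_q(n,d)$ has the same Hilbert series as its classical counterpart $V(n,d)$: in degree $k$, both have dimension equal to $\binom{n+kd}{kd}$. On the other side, the quotient $\CC_q[x_0,\dots,x_{N}]/\II$ (with $N=\binom{n+d}{d}-1$) can be endowed with a basis via rewriting: the quadratic relations allow one to rewrite any monomial in the $x_I$'s into a canonical normal form indexed by the sorted concatenation in $\CC_q[x_0,\dots,x_n]$, and one checks that these normal forms are linearly independent by observing that the relations in $\II$ are a flat $q$-deformation of the classical ones from part (a). Since the classical quotient has the right Hilbert series by part (a), so does the quantum one, and the surjection $\pi$ must be an isomorphism. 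The delicate point—the "hard" step in the outline—is verifying that the rewriting system defined by the quadratic relations is confluent (i.e.~gives a well-defined normal form), which amounts to checking a diamond lemma at the level of triple products $x_I x_J x_K$; here the $q$-factors are consistent precisely because $\Lambda(I,J;K,L)$ is additive under splitting a multiset concatenation into smaller pieces, which reduces the confluence check to the classical one.
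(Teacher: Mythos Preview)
The paper does not prove this lemma: part (a) is a citation to \cite{Har}, and part (b) is asserted without argument. Your outline for (b) is sound and already goes beyond what the paper supplies.

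One streamlining: the diamond-lemma/confluence verification you flag as the ``hard step'' is in fact unnecessary. You already have two independent facts. First, the rewriting rules let every degree-$k$ monomial in the $x_I$ be expressed as a scalar times the unique normal form whose concatenated index sequence in $x_0,\dots,x_n$ is globally sorted, so these normal forms \emph{span} $Q_k$ and hence $\dim Q_k \le \binom{n+kd}{n}$. Second, the surjection $\pi$ lands in $V_q(n,d)_k$, which has dimension exactly $\binom{n+kd}{n}$ because $\CC_q[x_0,\dots,x_n]$ is a flat deformation of the polynomial ring (as the paper itself notes). These two bounds force $\pi$ to be an isomorphism in each degree, and the linear independence of the normal forms then follows \emph{a posteriori} rather than needing a separate confluence check. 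In particular, your appeal to the relations being ``a flat $q$-deformation of the classical ones'' is not needed as an input---it is the conclusion.
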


 Recall that the radical $Rad(A)$ of an algebra $A$ is defined as the intersection of all maximal left modules. 
 
 \begin{lemma}
 \label{le:radical veronese}
 $Rad(\mathcal V (n,d))=\{0\}$ and $Rad(\mathcal V_q (n,d))=\{0\}$.
 \end{lemma}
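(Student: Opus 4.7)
The plan is to treat the classical and quantum cases separately. In both cases the key initial observation is that the Veronese algebra in question is a subring of an integral domain ($\CC[x_0,\ldots,x_n]$ in the classical case, and $\CC_q[x_0,\ldots,x_n]$ in the quantum case), hence is itself an integral domain.

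For the classical Veronese $\mathcal V(n,d)$, the algebra is finitely generated and commutative over $\CC$. By Hilbert's Nullstellensatz every finitely generated commutative $\CC$-algebra is a Jacobson ring: its Jacobson radical coincides with its nilradical. Since $\mathcal V(n,d)$ is a domain by the remark above, its nilradical is $\{0\}$, and therefore $Rad(\mathcal V(n,d))=\{0\}$ follows at once.

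For the quantum Veronese $\mathcal V_q(n,d)$, I would argue as follows. Fix a nonzero homogeneous element $a\in \mathcal V_q(n,d)$; we must exhibit a simple left $\mathcal V_q(n,d)$-module on which $a$ acts nontrivially. Using the PBW basis $\{x_0^{k_0}\cdots x_n^{k_n}:\sum k_i\equiv 0\pmod d\}$ inherited from $\CC_q[x_0,\ldots,x_n]$, single out a leading monomial $m$ of $a$ with respect to a fixed ordering on multi-indices. Then construct an infinite-dimensional cyclic left $\CC_q[x_0,\ldots,x_n]$-module of the form $\CC_q[x_0,\ldots,x_n]\big/\sum_{i\in S}\CC_q[x_0,\ldots,x_n]\cdot(x_i-\lambda_i)$, where the subset $S\subsetneq\{0,\ldots,n\}$ and the scalars $\lambda_i\in\CC^\times$ are chosen so that, after PBW reduction, the leading monomial $m$ acts by a nonzero scalar on an eigenbasis of the module. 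Passing to a simple subquotient on which $a$ is still nonzero and restricting to $\mathcal V_q(n,d)$ yields the required simple module, so $a\notin Rad(\mathcal V_q(n,d))$.

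The main obstacle is the quantum case. The difficulty is that one-dimensional representations of $\mathcal V_q(n,d)$ cannot distinguish all nonzero elements: any ``mixed'' PBW monomial involving two or more distinct generators of $\CC_q[x_0,\ldots,x_n]$ is forced to act as zero in any $1$-dimensional representation by the quantum commutation relations $x_jx_i=q^{-1}x_ix_j$. Thus, in contrast to the classical case, one must construct genuinely infinite-dimensional simple modules and carefully track how the leading PBW monomial of an arbitrary element of $\mathcal V_q(n,d)$ acts on these modules. Equivalently, one can invoke a non-commutative Nullstellensatz: since $\CC_q[x_0,\ldots,x_n]$ is a Noetherian affine Jacobson domain over $\CC$ for transcendental $q$, one transfers this property to $\mathcal V_q(n,d)$ by restricting a sufficient family of simples from $\CC_q[x_0,\ldots,x_n]$.
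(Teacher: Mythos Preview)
Your classical argument is correct, though it differs from the paper's. For the quantum case, however, there is a genuine gap. The step ``passing to a simple subquotient on which $a$ is still nonzero'' is exactly what is at issue: if $a$ lay in $Rad(\mathcal V_q(n,d))$, then by definition $a$ would annihilate \emph{every} simple $\mathcal V_q(n,d)$-module, and hence every simple subquotient of the restriction of your $\CC_q[x_0,\ldots,x_n]$-module. The fact that $a$ acts nontrivially on the restricted module does not prevent this; $a$ could act ``between'' the layers of a composition series and vanish on each factor. So the argument as written assumes what it is trying to prove. Your fallback to a non-commutative Nullstellensatz would need you to verify directly that $\mathcal V_q(n,d)$ itself satisfies the relevant hypotheses (affine Noetherian, or finite GK-dimension over an appropriate field, etc.), which you do not do; transferring the Jacobson property from $\CC_q[x_0,\ldots,x_n]$ to a subalgebra by restricting simples is not a valid general principle.

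The paper's proof avoids all of this with a single uniform observation that works identically in both the classical and quantum settings. Both $\mathcal V(n,d)$ and $\mathcal V_q(n,d)$ are $\ZZ_{\ge 0}$-graded domains whose degree-$0$ piece is the ground field. In any such algebra the only left-invertible elements are the nonzero scalars: if $yz=1$ and $z$ had a nonzero top component $z_m$ with $m>0$, then $y_{\mathrm{top}}z_m$ would be a nonzero component of $yz$ in positive degree, contradicting $yz=1$. Now recall the standard characterization (cited in the paper from \cite{Hung}): if $a\in Rad(A)$ then $1+a$ is left invertible. Hence $1+a$ is a scalar, so $a$ is a scalar, and since $Rad(A)$ is a proper ideal it contains no units; thus $a=0$. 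This two-line argument replaces your entire module-theoretic construction.
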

 
 \begin{proof}
 It is well-known, see e.g. \cite[Ch. IX.2]{Hung} that if $A$ is an algebra and $a\in Rad(A)$, then $1+a$ is left invertible. But if $x\in\mathcal V (n,d) $, resp.~$x\in\mathcal V_q(n,d) $ and $x+1$ is left invertible, then $x$ is a unit or $x=0$. The lemma is proved.
 \end{proof}
 
  We will now consider the Veronese subalgebras in the case of $n=1$ and $n=2$.  Recall that $ \CC[x_0,x_1]$ and  $ \CC[x_0,x_1, x_2]$, resp.~$ \CC_q[x_0,x_1]$and  $ \CC_q[x_0,x_1, x_2]$ can be interpreted as symmetric algebras of the simple $U(sl_2)$-modules $S(\overline V_1)$ and $S(\overline V_2)$, resp.~the simple   $U_q(sl_2)$-modules $S_\sigma (V_1)$ and $S_\sigma(V_2)$. We can therefore think about  the classical and quantum  Veronese algebras as  graded $U(sl_2)$, resp.~$U_q(sl_2)$-module algebras (see \cite{BZ}). Using the well known decomposition of the symmetric powers of $V_1$ and $V_2$, we obtain the following fact.
 
 \begin{lemma}
 \label{le:Veronese splitting}
 The graded components of the Veronese algebras $V(1,d)$, $V_q(1,d)$,  $V(2,d)$, $V_q(2,d)$ split as follows:
$$V(1,d)_k\cong \overline V_{kd}\ ,\quad V_q(1,d)_k\cong  V_{kd}\ ,$$
$$ V(2,d)_k  \cong \bigoplus_{i=0}^{\frac{kd}{2}} \overline  V_{2kd-4i}\  , \quad V_q(2,d)_k  \cong \bigoplus_{i=0}^{\frac{kd}{2}} V_{2kd-4i}\ .$$
 \end{lemma}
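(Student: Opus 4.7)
The plan is to decompose each graded component of the Veronese algebras by (i) identifying the ambient skew polynomial rings with (braided) symmetric algebras of $V_1$ and $V_2$, (ii) noting that the Veronese subalgebras exhaust every such degree, and (iii) decomposing the relevant symmetric powers of $V_1$ and $V_2$ as $U(sl_2)$- (resp.~$U_q(sl_2)$-) modules.

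Under the assignment sending $x_0,\ldots,x_n$ to a weight basis of $V_n$, the skew polynomial ring $\CC_q[x_0,x_1]$ (resp.~$\CC_q[x_0,x_1,x_2]$) acquires the structure of a graded $U_q(sl_2)$-module algebra, whose classical limit is the ordinary polynomial ring, i.e.~$S(\overline V_1)$ (resp.~$S(\overline V_2)$). I would identify these with $S_\sigma(V_1)$ and $S_\sigma(V_2)$ as follows. For $V_1$, $\Lambda^2_\sigma V_1$ is one-dimensional and a direct computation shows it is spanned by the vector encoding the relation $x_1x_0=q^{-1}x_0x_1$, giving $S_\sigma(V_1)\cong\CC_q[x_0,x_1]$. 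For $V_2$, $\Lambda^2_\sigma V_2\cong V_2$ is three-dimensional; the three skew-commutation relations $x_jx_i=q^{-1}x_ix_j$ ($i<j$) span a $U_q(sl_2)$-submodule of $V_2\otimes V_2$ isomorphic to $V_2$, and by the known flatness of $V_2$ (see \cite{ZW}) this submodule must coincide with $\Lambda^2_\sigma V_2$, giving $S_\sigma(V_2)\cong\CC_q[x_0,x_1,x_2]$.

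Since $\CC_q[x_0,\ldots,x_n]$ has a PBW basis of ordered monomials and any ordered monomial of degree $kd$ factors, up to a power of $q$, as a product of $k$ ordered monomials of degree $d$, we obtain $V_q(n,d)_k=\CC_q[x_0,\ldots,x_n]_{kd}$ (and analogously in the classical case). Thus the lemma reduces to computing $S^{kd}(\overline V_1)$, $S_\sigma^{kd}(V_1)$, $S^{kd}(\overline V_2)$ and $S_\sigma^{kd}(V_2)$. The $n=1$ case is classical: $S^m(\overline V_1)\cong\overline V_m$; for the quantum version, Proposition \ref{pr:n-th symm-power} provides an embedding $V_m\hookrightarrow S_\sigma^m(V_1)$, and this embedding is an isomorphism by dimension count using the flatness of $V_1$.

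For the $n=2$ case, I would decompose $S^m(\overline V_2)$ directly from weight multiplicities. With weight basis $\overline v_0,\overline v_1,\overline v_2$ of weights $2,0,-2$, the monomial $\overline v_0^a\overline v_1^b\overline v_2^c$ ($a+b+c=m$) has weight $2(a-c)$, and an elementary count gives $\dim S^m(\overline V_2)(2j)=\lfloor(m-j)/2\rfloor+1$ for $0\le j\le m$. The multiplicity of $\overline V_{2j}$ equals $\dim S^m(\overline V_2)(2j)-\dim S^m(\overline V_2)(2j+2)$, which is $1$ exactly when $m\equiv j\pmod 2$ and $0$ otherwise; reindexing by $j=m-2i$ yields $S^m(\overline V_2)\cong\bigoplus_{i=0}^{\lfloor m/2\rfloor}\overline V_{2m-4i}$. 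Flatness of $V_2$ together with the almost equivalence of Proposition \ref{pr:almost equivalent O bar O} transfers this decomposition verbatim to $S_\sigma^m(V_2)$. Substituting $m=kd$ yields the last two assertions. The only genuine obstacle is the identification $\CC_q[x_0,x_1,x_2]\cong S_\sigma(V_2)$; everything else is bookkeeping in weight space.
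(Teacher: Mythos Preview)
Your overall strategy is exactly what the paper intends: it states the lemma without proof, saying only that it follows from ``the well known decomposition of the symmetric powers of $V_1$ and $V_2$'', and you have spelled out precisely those details. The weight-multiplicity count for $S^m(\overline V_2)$ and the reduction $V_q(n,d)_k=\CC_q[x_0,\dots,x_n]_{kd}$ are both correct.

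There is, however, a genuine gap in your justification of $\CC_q[x_0,x_1,x_2]\cong S_\sigma(V_2)$. The claim that the three elements $v_j\otimes v_i-q^{-1}v_i\otimes v_j$ ($i<j$) span a $U_q(sl_2)$-submodule of $V_2\otimes V_2$ is false: with $\Delta(E)=E\otimes 1+K\otimes E$ one computes $E\bigl(v_2\otimes v_0-q^{-1}v_0\otimes v_2\bigr)=[2]_q\bigl(v_1\otimes v_0-q\,v_0\otimes v_1\bigr)$, which is not proportional to $v_1\otimes v_0-q^{-1}v_0\otimes v_1$. Indeed, the weight-zero vector of $\Lambda^2_\sigma V_2$ carries a $v_1\otimes v_1$ term, so $S_\sigma(V_2)$ is not literally the skew polynomial ring with relations $x_jx_i=q^{-1}x_ix_j$; the paper's own identification is loose in the same way. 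This does not damage the lemma: all you actually need is that $S_\sigma(V_2)$ is flat (cf.\ \cite{ZW}), so its $m$-th graded piece has dimension $\binom{m+2}{2}$ and, via the almost equivalence of Proposition~\ref{pr:almost equivalent O bar O}, the same $U_q(sl_2)$-decomposition as $S^m(\overline V_2)$. Read the quantum Veronese as sitting inside $S_\sigma(V_2)$ rather than inside the standard $\CC_q[x_0,x_1,x_2]$, and your argument is complete.
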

 
 Additionally, we have the following fact, relating Veronese algebras with classical and braided symmetric algebras.
 
 \begin{lemma}
 \label{le:V gen subal of Ve}
 \noindent(a) Let $\overline V$ be a $U(sl_2)$-submodule of $V(n,d)_1$. Then there exists a surjective $U(sl_2)$-module homomorphism from the symmetric algebra $S(\overline V)$ onto the subalgebra of $V_{n,d}$ generated by $\overline V$.

 \noindent(b) Let $V$ be a $U_q(sl_2)$-submodule of $V_q(n,d)_1$, $n=1,2$. Then there exists a surjective $U_q(sl_2)$-module homomorphism from the braided symmetric algebra $S_\sigma( V)$ onto the subalgebra of $V_q(n,d)$ generated by $V$.
 \end{lemma}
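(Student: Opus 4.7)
My plan is to adopt the same strategy in both parts: I start from the tautological inclusion of $V$ (resp.~$\overline V$) into the degree-one part of the target Veronese algebra, extend it to an algebra map out of the tensor algebra by the universal property, and show that the defining quadratic relations of the (braided) symmetric algebra already hold in the Veronese target so that the map descends. Surjectivity onto the subalgebra generated by $V$ (resp.~$\overline V$) is then tautological from this description, and the $U_q(sl_2)$-equivariance is automatic since the multiplication in the Veronese algebra is a $U_q(sl_2)$-module map.

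For part (a), the universal property applied to $\overline V\hookrightarrow V(n,d)\subset \CC[x_0,\ldots,x_n]$ gives a $U(sl_2)$-module algebra map $T(\overline V)\to V(n,d)$. Since $V(n,d)$ lives inside the ordinary commutative polynomial ring, every antisymmetric tensor $u\otimes v - v\otimes u\in \Lambda^2 \overline V$ is killed, so the map factors through $S(\overline V)=T(\overline V)/\langle \Lambda^2 \overline V\rangle$ and surjects onto the subalgebra generated by $\overline V$.

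For part (b), the inclusion $V\hookrightarrow V_q(n,d)_1 \subset S_\sigma(V_n)=\CC_q[x_0,\ldots,x_n]$ extends to a $U_q(sl_2)$-module algebra map $T(V)\to V_q(n,d)$, and I must check that $\Lambda^2_\sigma V\subset V\otimes V$ is annihilated so that the map factors through $S_\sigma(V)=T(V)/\langle \Lambda^2_\sigma V\rangle$. By the functoriality of the braided exterior square (Proposition \ref{pr:injections}), the inclusion $V\hookrightarrow V_q(n,d)_1$ induces $\Lambda^2_\sigma V\hookrightarrow \Lambda^2_\sigma V_q(n,d)_1=\Lambda^2_\sigma S_\sigma(V_n)_d$, so it suffices to verify that the multiplication $\mu: S_\sigma(V_n)_d\otimes S_\sigma(V_n)_d\to S_\sigma(V_n)_{2d}$ vanishes on $\Lambda^2_\sigma S_\sigma(V_n)_d$.

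For $n=1$ this is immediate from Schur's lemma: both $S_\sigma(V_1)_d=V_d$ and $S_\sigma(V_1)_{2d}=V_{2d}$ are simple, and $\Lambda^2_\sigma V_d$ (which decomposes as a direct sum of copies of $V_{2d-2-4i}$) contains no copy of $V_{2d}$. The main obstacle and the substance of (b) is the case $n=2$, where $S_\sigma(V_2)_d$ is no longer simple. The key input here is the $\sigma$-commutativity of the quantum polynomial algebra $S_\sigma(V_2)=\CC_q[x_0,x_1,x_2]$, namely the identity $\mu=\mu\circ \sigma_{S_\sigma(V_2)_d,S_\sigma(V_2)_d}$ on each graded piece. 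This identity holds at degree $(1,1)$ by the very definition of $S_\sigma(V_2)$, since the defining ideal $\langle \Lambda^2_\sigma V_2\rangle$ is exactly the $(-1)$-eigenspace of $\sigma_{V_2,V_2}$. To propagate it to arbitrary degrees I would induct, using the hexagon axiom to decompose $\sigma_{S_\sigma(V_2)_d,S_\sigma(V_2)_d}$ as an iterated composition of degree-$(1,1)$ braidings between individual tensor factors, which are then absorbed one at a time into the multiplication via the degree-$(1,1)$ case. Once this braided commutativity is in hand, $\mu$ kills its $(-1)$-eigenspace $\Lambda^2_\sigma S_\sigma(V_2)_d$, hence also $\Lambda^2_\sigma V$, completing the proof of (b).
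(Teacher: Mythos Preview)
Your overall strategy for both parts---extend the inclusion to a map out of $T(V)$ by the universal property, then check that the defining quadratic relations already vanish in the target---is the same as the paper's, and your treatment of part~(a) and of the $n=1$ case in~(b) (via Schur's lemma) is correct and in fact slightly cleaner than the paper's uniform argument.

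The gap is in the $n=2$ case of~(b). You want to establish $\mu=\mu\circ\sigma_{S_\sigma(V_2)_d,\,S_\sigma(V_2)_d}$ by inducting from degree $(1,1)$, decomposing the block $\sigma$ into elementary degree-$(1,1)$ pieces via the hexagon axiom. But $\sigma$ does \emph{not} satisfy the hexagon axiom: as recalled in Section~\ref{se:q-group}, the pair $(U_q(\gg),\widehat R)$ is only a \emph{coboundary} Hopf algebra, so $(\OO_f,\sigma)$ is a cactus category rather than a braided (let alone symmetric) monoidal one. Concretely, $\sigma=\RR D^{-1}$ and the correction $D$ is built from the Casimir, which does not behave multiplicatively under the coproduct; hence $\sigma_{U\otimes V,W}\neq(\sigma_{U,W}\otimes 1)(1\otimes\sigma_{V,W})$ in general, and your induction step does not go through as written.

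The paper circumvents this by working with the genuine braiding $\RR$, which \emph{does} satisfy the hexagon (equivalently, the braid relations), together with the flatness of $S_\sigma(V_n)$ for $n=1,2$---a hypothesis you never invoke. Flatness gives the identification $S_\sigma(V_n)_d\cong S_\sigma^d V_n\hookrightarrow V_n^{\otimes d}$, so that $V\otimes V$ sits inside $V_n^{\otimes 2d}$; the block braiding $\RR_{1\ldots d,\,(d+1)\ldots 2d}$ is then written, via the braid group, as a product of elementary $\RR_{i,i+1}$'s, and one argues that a vector lying in the positive eigenspace of every $\RR_{i,i+1}$ is a sum of positive eigenvectors for the block braiding. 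Without flatness you cannot even make sense of the ``individual tensor factors'' of $S_\sigma(V_2)_d$ that your decomposition would require, so this ingredient is not optional.
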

 
 \begin{proof}
 We first prove (b), and the proof of (a) will be analogous. Notice that if $V\subset V_q(n,d)_1$, then we can embed $V$ in $V_{n}^{\otimes kd}$. Recall the alternate definition of  $S_\sigma(V)$ as in Remark \ref {re: BSA via braiding}. Using well-known facts about braid groups, we  write $\RR_{12\ldots kd,(kd+1)\ldots 2kd}$ acting on $V_{n}^{\otimes kd}\otimes V_{n}^{\otimes kd}$ as a product of $\RR_{i,i+1}$. Because $S_\sigma(V_n)$ is flat when $n=1,2$, we see that if  ${\bf v}\in V_{n}^{\otimes kd}\otimes V_{n}^{\otimes kd}$  is in the subspace spanned by the "positive" eigenvectors for each  $\RR_{i,i+1}$ then it can be expressed  as a sum of eigenvectors with positive eigenvalue for  $\RR_{12\ldots kd,(kd+1)\ldots 2kd}$.  The  homomorphism $T(V)\to  V_q(n,d)$ therefore factors through $S_\sigma(V)$. Part (b) is proved.
 Part(a) follows by an analogous argument, as the symmetric algebras $S(\overline V_n)$ are trivially flat.
 \end{proof}
 
 Notice that $\overline V_{2d}\subset  V(2,d)_1$ and  $V_{2d}\subset  V_q(2,d)_1$ . We have the following fact about the subalgebras $A(2,d)$, resp.~$A_q(2,d)$ generated by  $\overline V_{2d}$ and  $V_{2d}$.
 
\begin{lemma}
\label{le:V generates sub Vero}
\noindent(a) The subalgebra $A(2,d)\subset  V(2,d)$ is a graded $U(sl_2)$-module algebra and $A_{q}(2,d)\subset  V_q(2,d)$ is a graded $U_q(sl_2)$-module algebra. If $k\ge 2$ then
$$A(2,d)_k= V(2,d)_k\ ,\quad  \text{and} \quad  A_q(2,d)_k= V_q(2,d)_k\ .$$

\noindent(b) The ideal of relations of the natural projection map $T(\overline V_\ell)\to A(2,d)$, resp.~$T( V_\ell)\to A_q(2,d)$ is generated by quadratic and cubic elements.

\end{lemma}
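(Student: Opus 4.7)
The plan is to treat parts (a) and (b) separately.  Part (a) is a representation-theoretic bookkeeping exercise; part (b) is the substantive claim and is where the main obstacle lies.

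For part (a), the module-subalgebra property is automatic, since $\overline V_{2d}\subset V(2,d)_1$ (resp.~$V_{2d}\subset V_q(2,d)_1$) is $U(sl_2)$- (resp.~$U_q(sl_2)$-)invariant and the multiplication is module-equivariant.  I prove $A(2,d)_k=V(2,d)_k$ for $k\ge 2$ by induction on $k$.  For the base case $k=2$: by Lemma \ref{le:Veronese splitting} and the classical Clebsch--Gordan formula, both $V(2,d)_2$ and $S^2(\overline V_{2d})$ decompose as $\bigoplus_{i=0}^d \overline V_{4d-4i}$, so they share the dimension $(d+1)(2d+1)$ and the same $U(sl_2)$-isotypic type.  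The multiplication map $S^2(\overline V_{2d})\to V(2,d)_2$ is $U(sl_2)$-equivariant, and since $v_0^d\cdot v_0^d=v_0^{2d}\ne 0$ reaches the top $\overline V_{4d}$-summand, a short highest-weight-vector check on each lower isotypic piece forces the map to be an isomorphism by Schur's lemma.  For the inductive step $k\ge 3$: writing $V(2,d)_k=V(2,d)_1\cdot V(2,d)_{k-1}=V(2,d)_1\cdot A(2,d)_{k-1}$ (IH), and decomposing any $w\in A(2,d)_{k-1}$ as $w=b\cdot x$ with $b\in \overline V_{2d}$, $x\in A(2,d)_{k-2}$, associativity gives $c\cdot w=(c\cdot b)\cdot x$; the factor $c\cdot b\in V(2,d)_2=\overline V_{2d}\cdot\overline V_{2d}$ (base case) places the product in $A(2,d)_2\cdot A(2,d)_{k-2}\subseteq A(2,d)_k$.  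The quantum case runs identically, with $q$-commutativity in place of ordinary commutativity.

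For part (b), the quadratic antisymmetric (resp.~$\sigma$-antisymmetric) relations $\Lambda^2(\overline V_{2d})$ (resp.~$\Lambda^2_\sigma V_{2d}$) automatically lie in the kernel of $T(\overline V_{2d})\to A(2,d)$, so the projection factors through $S(\overline V_{2d})$ (resp.~$S_\sigma(V_{2d})$).  Let $J$ denote the kernel of this second surjection; from part (a) one has $J_2=0$.  The goal is $J=\langle J_3\rangle$, equivalently $A(2,d)\cong S(\overline V_{2d})/\langle J_3\rangle$.  I set $B:=S(\overline V_{2d})/\langle J_3\rangle$, which surjects onto $A(2,d)$ and agrees with it in degrees $\le 3$ by construction.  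I prove $B\cong A(2,d)$ by induction: given $B_j\cong V(2,d)_j$ for $2\le j<k$, the part (a) argument applied to $B$ shows $B_k\twoheadrightarrow V(2,d)_k$ is surjective.  For injectivity of $\pi_k\colon B_k\to V(2,d)_k$, apply the snake lemma to the commutative diagram
\[
\begin{array}{ccc}
\overline V_{2d}\otimes B_{k-1} & \twoheadrightarrow & B_k \\
\downarrow & & \downarrow \\
\overline V_{2d}\otimes V(2,d)_{k-1} & \twoheadrightarrow & V(2,d)_k \\
\end{array}
\]
whose left vertical is an isomorphism by IH.  The snake yields $\ker\pi_k\cong K_V/\mathrm{im}(K_B)$, where $K_B,K_V$ are the kernels of the horizontal multiplication maps; hence $\pi_k$ is injective iff every ``mixed'' syzygy of $V(2,d)$ in bi-degree $(1,k-1)$ lifts to a syzygy of $B$.

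The main obstacle is this last syzygy-lifting step.  I would attack it by exploiting that $V(2,d)$ has a purely quadratic presentation (Lemma \ref{le:V gen subal of Ve}(a)) and is Koszul as a Veronese subring of the polynomial algebra: its higher syzygies are generated by quadratic relations, and restricting these to $\overline V_{2d}\subset V(2,d)_1$ introduces only the cubic relations already encoded in $J_3$.  In practice one can either verify the required surjectivity $K_B\twoheadrightarrow K_V$ by explicit weight-basis computation in low degrees and propagate by induction, or invoke general Koszul-theoretic results for Veronese algebras.  The quantum case runs in parallel via the $q$-deformed Veronese relations and the braided symmetric algebra structure.
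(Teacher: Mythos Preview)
For part~(a) your outline matches the paper's: induction on $k$, base case $k=2$ handled via highest-weight considerations, and an associativity argument for the step. Your inductive step is in fact cleaner than the paper's, which instead writes each monomial of $V(2,d)_k$ explicitly as a $\overline V_{2d}$-linear combination of elements of $V(2,d)_{k-1}$ by setting up and solving a small linear system. But you have underestimated the base case. The ``short highest-weight-vector check on each lower isotypic piece'' that you defer is precisely the substance of the paper's proof: since both sides are multiplicity-free with the same isotypic type, Schur's lemma does reduce the question to showing that no isotypic component of $S^2(\overline V_{2d})$ (resp.\ $S^2_\sigma V_{2d}$) is killed by multiplication, but this nonvanishing is not automatic. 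The paper establishes it by first computing $F^m(x_0^d)$ explicitly as a polynomial in $x_0,x_1,x_2$ and recording which monomials occur with nonzero (in fact positive) coefficient, then writing the candidate highest-weight vector of weight $4d-4i$ as the alternating sum $\sum_{j}(-1)^j\binom{i}{j}\,v_j v_{2i-j}$, and finally observing that only the even-$j$ summands contribute to the coefficient of $x_0^{2d-i}x_2^i$ in the image, so all contributions carry the same sign and cannot cancel. This is elementary but is the real work in (a); calling it a short check understates it.

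For part~(b) your snake-lemma and Koszulity route is considerably heavier than what the paper does, and the syzygy-lifting obstacle you isolate is an artifact of your setup rather than a genuine difficulty. The paper's argument is essentially one sentence once (a) is in hand: for $m\ge 4$ one splits $m=k+\ell$ with $k,\ell\ge 2$, so that $A(2,d)_k\cdot A(2,d)_\ell=V(2,d)_k\cdot V(2,d)_\ell=V(2,d)_m$; since $V(2,d)$ itself has only quadratic relations as a quotient of $T(V(2,d)_1)$, no new relations can appear in $A(2,d)$ in degree $m$ beyond those already forced in degrees $2$ and $3$. Only quadraticity of the Veronese is used, never Koszulness. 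The reason your diagram manufactures an obstruction is that you keep one tensor factor in degree $1$, exactly where $A(2,d)_1=\overline V_{2d}$ differs from $V(2,d)_1$; if instead you group the factors so that both sides have degree $\ge 2$, where $A(2,d)$ and $V(2,d)$ coincide, the comparison of relation spaces becomes immediate and the appeal to Koszul theory is unnecessary.
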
 

\begin{proof}
Prove (a) first.  We prove the assertion by induction on $k$.  First, consider the submodule $\overline V_{2d}\subset V_q(2,d)_1$ of resp.~$V_{2d}\subset V_q(2,d)_1$. It is generated by $\overline x_0^d$, resp.~$x_0^d$. We have the following fact which can be proved by an easy induction argument using the definitions of Section \ref{se:q-group}. 
 
 \begin{claim}
 \label{cl:F action on S}
 \noindent (a) For each $m\ge0$, we have
 $$F^m(\overline x_0^d)=\sum_{i,j\ge 0} \overline c_{ij} \overline x_0^i\overline x_1^{d-i-j}\overline x_2^j \ ,$$
  $$F^m(x_0^d)=\sum_{i,j} c_{ij} x_0^ix_1^{d-i-j}x_2^j \ ,$$
 where $\overline c_{ij}\in  \ZZ_{>0}$ resp.~the classical limit  of $c_{ij}$ is a positive integer, if and only if  $2i-2j=2d-2m$ (i.e.~the monomial lies in the correct weight-space).   Otherwise $\overline c_{ij}=c_{ij}=0$.
 
 \noindent(b)There exists a unique solution $(i,j)\in \ZZ^2_{\ge 0}$  of the system of equations $i+j=d$ and $i-j=d-m$ if and only if $m$ is even. That means that if $m$ is odd, then $F^m(x_0^d)$ is divisible by $x_1$.
 \end{claim}
    
  Secondly, we also need the following fact which is easy to prove.
  
  \begin{claim}
  \label{cl:hw in S(V2)}
  The highest weight vectors ${\bf \overline v_{m}}$ in $S(\overline V_2)_d$,  resp.~${\bf v_{m}}$ in $S_\sigma( V_2)_d$ of weight $2d-2m$ are of the form 
  $${\bf \overline v_{m}}=\sum_{i=0}^m (-1)^i \binom{m}{i} \overline x_0^{d-m-i}\ \overline x_1^{2i}\ \overline x_2^{m+i} \ ,$$
  $${\bf  v_{m}}=\sum_{i=0}^m (-1)^i \binom{m}{i}_q x_0^{d-m-i} x_1^{2i} x_2^{m+i} \ .$$
  \end{claim}

 Now consider the products $ \overline  V_{2d}\cdot  \overline V_{2d} \subset V(2,d)_2$ and $V_{2d}\cdot V_{2d} \subset V_q(2,d)_2$. It is easy to see that  
 \begin{equation}
 \label{eq:evenhw in tensors1}
E ({\bf \overline v_{i}})=E\left(\sum_{j=0}^i  (-1)^j \binom{i}{j}  \overline v_j  \overline v_{2i-j}\right)=0\ ,\end{equation}
\begin{equation}
 \label{eq:evenhw in tensors2}
 E ({\bf  v_{i}})=E\left(\sum_{j=0}^i  (-1)^j \binom{i}{j}_q  v_j v_{2i-j}\right)=0\ 
 \end{equation}
 we have to verify, however, that  ${\bf \overline v_{i}}\ne 0\in V(2,d)$, resp.~${\bf v_{i}}\ne 0\in V_q(2,d)$. Claim \ref{cl:F action on S}  (a) and (b) and Equation \eqref{eq:evenhw in tensors1} allow us to determine that  
 $${\bf \overline v_{i}}=\sum_{j=0}^i  (-1)^j \binom{i}{j}  \overline v_j\overline v_{2i-j}  =  c\overline x_0^{2d-i}\overline x_2^i +\overline x_1(\ldots)  \ ,$$
 $$  {\bf  v_{i}}=\sum_{j=0}^i  (-1)^j \binom{i}{j}_q  v_j v_{2i-j}  =  c x_0^{2d-i} x_2^i +x_1(\ldots)$$
 with non-zero $c$, because of the following argument. Terms of the form  $\overline x_0^{d-k}\overline x_2^k$, resp.~$x_0^{d-k} x_2^k$ appear only in the weight vectors $\overline v_{2k}$, resp.~$v_{2k}$ in $V_{2d}$. But these terms appear only in summands with positive sign in \eqref{eq:evenhw in tensors1} resp.~\eqref{eq:evenhw in tensors2}.  This implies that $c\ne 0$. Hence the ${\bf \overline v_{i}}$, resp.~${\bf  v_{i}}$ are  highest weight vectors of the desired weight and $\overline V_{2d}\cdot \overline V_{2d}$ , therefore, generates $V(2,d)_2$, while $V_{2d}\cdot V_{2d}$ generates $V_q(2,d)_2$.
 
 In the case of $V(2,d)_{k}$ and $V_q(2,d)_{k}$, $k\ge 3$, assume by induction that the assertion has been proven for $k-1$. We observe that we can express any monomial  $\overline x_{0}^i \ \overline x_{1}^j\  \overline x_2^{kd-i-j}$, resp.~$x_{0}^i x_{1}^j x_2^{kd-i-j}$ as 
 $$\overline  x_{0}^i\  \overline  x_{1}^j \ \overline  x_2^{kd-i-j}=\sum_{m=0}^{d} a_m \cdot  \overline  v_m \ ,$$
 $$x_{0}^i x_{1}^j x_2^{kd-i-j}=\sum_{m=0}^{d} a_m \cdot v_m \ ,$$
 where $a_m\in V(2,d)_{k-1}$ and $v_m\in V_{2d}$ as defined above. If $ i\ge 2d$ or $kd-i-j\ge 2d$, then it is obvious as  $a_i=0$ for all $i>0$, resp.~ $a_i=0$ for all $i<d$. In the remaining cases, we will set up a system of equations as follows. We write
  $$\overline x_{0}^i\ \overline  x_{1}^j\overline\  x_2^{kd-i-j}=\sum_{m=0}^{d-i} a_m \cdot \overline v_m \ ,$$
 $$x_{0}^i x_{1}^j x_2^{kd-i-j}=\sum_{m=0}^{d-i} a_m \cdot v_m \ ,$$
 and it is easy to see that we can solve the resulting system of equations for the $a_m$.
  Lemma \ref{le:V generates sub Vero}(a) follows. 
  
 Now consider (b). Recall that $A(2,d)=T(\overline{V_{2d}})/I$ where $I$ is a homogeneous ideal.  Since $A(2,d)_k=V(2,d)_k$, for $k\ge 2$ and  $A(2,d)_k\cdot A(2,d)_\ell=V(2,d)_m$ for all $k+\ell=m$ with $k,\ell\ge 2$ and since there are only quadratic relations in $V(2,d)$, the ideal $I$ is generated by homogeneous elements of  degree  at most three.  One argues analogously in the case of $A_q(2,d)$ and $V_q(2,d)$.   Part(b) and, therefore,  Lemma \ref{le:V generates sub Vero} are proved.   \end{proof}
 \end{appendix}

\end{document}